\newtheorem{Pro}{Proposition}[subsection]
\newtheorem{Le}[Pro]{Lemma}
\newtheorem{Th}[Pro]{Theorem}
\newtheorem{Co}[Pro]{Corollary}
\theoremstyle{definition}
\theoremstyle{remark}
\newtheorem{Exm}[Pro]{Example}
\def\af{\frak a}
\def\fb{\frak b}
\def\fa{\frak F}
\def\ea{{\mathscr E}}
\def\fa{{\mathscr F}}
\def\Loc{\mathfrak{Loc}}
\def\fork{\pitchfork}
\let\t\otimes
\let\xto\xrightarrow
\def\0{^{[1]}}
\def\1{^{-1}}
\def\cref#1#2#3{\left(#2\right.\left|\ #3\right)_{#1}}
\def\hi{\mathfrak{I}}
\def\ii{\mathscr{I}\hspace{-0.2em}\mathscr{I}}
\def\ta{{\mathscr T}}
\def\ope{{\bf Open}}
\def\N{{\mathbb N}}
\def\Sh{{\bf Sh}}
\def\F{{\mathsf F}}
\def\Spec{{\rm Spec}}
\def\Sets{{\mathscr{Sets}}}
\def\set{{\sf {Sets}}}
\def\Pts{{\bf Pts }}
\def\Hom{{\sf Hom}}
\def\sF{\mathsf{F}}
\def\hom{{\sf Hom}}
\numberwithin{equation}{subsection}
\def\fa{\mathfrak{a}}
\def\fm{\mathfrak{m}}
\def\cF{\mathcal{F}}
\begin{document}

\title{Idempotents and the points of the topos of M-sets}

\author[I. Pirashvili]{Ilia Pirashvili}

\maketitle

\begin{abstract} The aim of this paper is to study the points and localising subcategories of the topos of $M$-sets, for a finite monoid $M$. We show that the points of this topos can be fully classified using the idempotents of $M$. We introduce a topology on the iso-classes of these points, which differs from the classical topology introduced in SGA4. Likewise, the localised subcategories of the topos $M$-sets correspond to the set of all two-sided idempotent Ideals of $M$.
\end{abstract}

\section{Introduction}

For a topos $\ea$, we denote the category of points of $\ea$ by $\Pts(\ea)$ and the isomorphism classes of $\Pts(\ea)$ is denoted by $\F(\ea)$. In SGA4 \cite{SGA4}, the authors defined a topological space structure on the set $\F(\ea)$, based on the subobjects of the terminal object of $\ea$. In the case when $\ea$ is the topos ${\bf Sh}(X)$ of sheaves on a sober topological space $X$, the space constructed in \cite{SGA4} is homeomorphic to $X$. However, this topology is quite trivial in many cases. Particularly, when $\ea=\Sets_M$ is the topos of right $M$-sets. In this case, the obtained topological space has only two open sets, though $\F(M)$ could be quite big. Hence, it is natural to search for other topologies on $\F(\ea)$.

In recent years, there was a considerable attention to the problem of understanding the points and possible topological space structures on $\ea=\Sets_M$. When $M=\N_+^{\times }$ is the multiplicative monoid of strictly positive natural numbers there is an interesting topological space structure on the set $\F(\Sets_M)$ studied by  Connes-Consani \cite{arith_site} and  Le Bruyn \cite{llb}, see also a related paper by Hemelaer \cite{h}.

On the other hand, there exists the very interesting topological space $\Spec(M)$, for a commutative monoid $M$. The elements of  $\Spec(M)$ are prime ideals of $M$ \cite{kato} and the topology is defined exactly as in classical algebraic geometry. This space plays an important role in $F_1$-mathematics and $K$-theory, see for example \cite{deitmar},\cite{f1zeta},\cite{corinas}.

In our previous papers \cite{points}, \cite{reconstruction}, we reconstructed the topological space $\Spec(M)$, from the topos $\Sets_M$, when $M$ is finitely generated and commutative. In fact, we constructed a bijections of sets $\Spec(M)\to \F(\Sets_M)$ and 
${\sf Open}(\Spec(M)) \to {\Loc}(\Sets_M)$, where ${\Loc}(\ea)$ is the set of localised subcategories of a topos $\ta$ and ${\sf Open}(X)$ is the set of all open subsets of a topological space $X$.

We introduce the following notation: For a localising subcategory $\ta$ of $\ea$ and $p=(p_*,p^*):\set \to\ea$ a topos point of $\ea$, we write $p\fork \ta$ if $p_*(S)\in \ta$ for every set $S\in\set$.

The aim of this work is to investigate the points and localising categories of $\Sets_M$, when $M$ is a finite monoid. We show that localising subcategories induces an interesting topology on the set $\F(\Sets_M)$. In more details, we construct a bijection from the set $\F(\Sets_M)$ to the set of ${\mathfrak{I}}$-classes of idempotents, where ${\mathfrak{I}}$ is the Green relation (that is, $e{\mathfrak{I}} f$ if and only if $MeM=MfM$). Since this set has a natural order $\leq_{\mathfrak{I}}$ ($e\leq_{\mathfrak{I}} f$ if $MeM\subseteq MfM$), one can consider the order topology on $\F(\Sets_M)$. Our next result claims that there are bijections from the set of localised subcategories ${\Loc}(\Sets_M)$ to the set of all two-sided idempotent ideals of $M$, and also to the set of all open subsets of the ordered topology on $\F(\Sets_M)$.

\section{Preliminaries}

\subsection{Points and filtered $M$-sets}

Recall that a \emph{point} of a Grothendieck topos, or simply a topos $\ta$, is a geometric morphism ${\sf p}=({\sf p}_*,{\sf p}^*):\Sets\to \ta$ from the topos of sets $\Sets$ to $\ta$. The inverse image functor ${\sf p}^*:\ta\to\Sets$ preserves colimits and finite limits. It is also well-known that, conversely, for any functor ${\sf f}:\ta\to\Sets$ which preserves colimits and finite limits, one has ${\sf f}={\sf p}^*$ for a uniquely defined point ${\sf p}$. We let $\Pts(\ta)$ be the category of points of $\ta$ and $\sF_\ta$ the isomorphism classes of the category $\Pts(\ta)$.

Let $M$ be a monoid. The category of left (resp. right) $M$-sets is denoted by $_M\Sets$ (resp. $\Sets_M$). It is well known that these categories are topoi.   Instead of $\Pts(\Sets_M)$  and $\sF_{\Sets_M}$ we write $\Pts(M)$ and $\sF_M$. By Diaconescu's theorem  \cite{mm}, the category $\Pts(M)$ is equivalent to the category of filtered left $M$-sets. Recall that a left $M$-set $A$ is called \emph{filtered}, provided the functor
$$(-)\otimes_M A:\, _M\Sets\to \Sets$$
commutes with finite limits. The  topos point of $\Sets_M$ corresponding to a filtered left $M$-set $A$ is denoted by ${\sf p}_A=({\sf p}_{A*},{\sf p}_A^*)$. The inverse image functor ${\sf p}^*_A: \Sets_M\to \Sets $ is given by
$${\sf p}^*_A(X)=X\otimes_MA,$$
while the direct image functor ${\sf p}_{A*}:\Sets\to \Sets_M$ sends a set $Y$ to $\Hom_\Sets(A,Y)$. The latter is considered as a right $M$ set via
$$(\alpha m)(a):=\alpha(ma).$$
Here, $\alpha\in \Hom_\Sets(A,X)$, $a\in A$ and $m\in M$.
 
The following, well-known, fact \cite[p.24]{m} is a very useful tool for checking whether a given $M$-set is filtered.
 
\begin{Le}\label{Fcond} A left $M$-set $A$ is filtered if and only if the following three conditions hold:
\begin{itemize}
	\item[(F1)] $A\not =\emptyset$.
	\item[(F2)] If $m_1,m_2\in M$ and $a\in A$ satisfies the condition
	$$m_1a=m_2a,$$
	there exist $m\in M$ and $\tilde{a}\in A$, such that $m\tilde{a}=a$ and $m_1m=m_2m$.
 	\item[(F3)] If $a_1,a_2\in A$, there are $m_1,m_2\in M$ and $a\in A$, such that $m_1a=a_1$ and $m_2a=a_2$.
\end{itemize}
\end{Le}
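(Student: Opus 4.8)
The plan is to deduce the lemma from the standard description, via categories of elements, of when a cocontinuous functor out of $\Sets_M$ preserves finite limits, and then to translate the resulting condition into (F1)--(F3). Since ${\sf p}_A^*=(-)\otimes_M A$ is a left adjoint it automatically preserves colimits, so all that has to be analysed is preservation of finite limits. I would introduce the category $\mathcal{E}_A$ whose objects are the elements of $A$, in which a morphism $a\to a'$ is an element $m\in M$ with $ma'=a$, and with composition given by multiplication in $M$ (so $1\in M$ is the identity of each object and $a\xrightarrow{m}a'\xrightarrow{m'}a''$ composes to $mm'$). A short computation with the free left $M$-set ${}_M M$ shows that $A=\mathrm{colim}_{\mathcal{E}_A}D$, where $D$ sends each object to ${}_M M$ and a morphism $m$ to right multiplication by $m$. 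Tensoring with a right $M$-set $X$ and using $X\otimes_M{}_M M\cong X$ then yields a natural isomorphism
\[
X\otimes_M A\;\cong\;\mathrm{colim}_{\mathcal{E}_A}X,
\]
in which a copy of the underlying set of $X$ sits over each object of $\mathcal{E}_A$ and a morphism $m$ acts by right multiplication on $X$. As the forgetful functor $\Sets_M\to\Sets$ preserves finite limits and finite limits commute with filtered colimits in $\Sets$, this shows that $(-)\otimes_M A$ preserves finite limits whenever $\mathcal{E}_A$ is a filtered category; the converse is the classical flat functor theorem (see \cite{mm}). Hence $A$ is filtered if and only if $\mathcal{E}_A$ is a filtered category.

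It then remains to unwind the three axioms of a filtered category --- nonemptiness, existence of a cocone over every pair of objects, and coequalizability of every parallel pair of morphisms --- into the three conditions of the lemma. Nonemptiness of $\mathcal{E}_A$ is exactly (F1). The axiom that any two objects $a_1,a_2$ admit an object $a$ together with morphisms $a_1\to a$ and $a_2\to a$ --- that is, elements $m_1,m_2\in M$ and $a\in A$ with $m_1a=a_1$ and $m_2a=a_2$ --- is exactly (F3). And the axiom that any parallel pair $m_1,m_2\colon b\to a$ of $\mathcal{E}_A$ (so that $m_1a=b=m_2a$, and in particular $m_1a=m_2a$) is coequalized by some $m\colon a\to\tilde{a}$ (so that $m\tilde{a}=a$) with $m_1m=m_2m$ is precisely (F2). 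This establishes the equivalence.

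The point I expect to require the most care is a matter of bookkeeping rather than of depth: one has to fix the direction of the morphisms of $\mathcal{E}_A$ correctly, so that $A$ genuinely arises as the colimit of the diagram $D$ of copies of ${}_M M$, and so that it is \emph{filteredness} of $\mathcal{E}_A$ --- not cofilteredness --- that corresponds to flatness. The converse half of the flat functor theorem is the one external input I would quote rather than reprove. If a self-contained treatment is preferred, one can instead present $X\otimes_M A$ as the coequalizer of $X\times M\times A\rightrightarrows X\times A$, $(x,m,a)\mapsto(xm,a)$ and $(x,m,a)\mapsto(x,ma)$, and check directly that $(-)\otimes_M A$ preserves the terminal object, binary products and equalizers, thereby reproducing the classical proof that filtered colimits commute with finite limits in $\Sets$ with the category of elements replaced by the explicit data in (F1)--(F3); in that approach the equalizer case, which rests on (F2), is the one demanding the most attention.
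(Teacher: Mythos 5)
Your proposal is correct, but note that the paper does not actually prove this lemma at all: it is imported as a known fact from Moerdijk's lecture notes \cite[p.~24]{m}, so there is no internal argument to compare with. What you give is the standard proof of the flat-functor theorem specialised to a one-object category, and the details check out: with your orientation of morphisms ($m\colon a\to a'$ meaning $ma'=a$, composition $a\xrightarrow{m}a'\xrightarrow{m'}a''$ equal to $mm'$) one does get $A\cong\mathrm{colim}_{\mathcal{E}_A}D$ (the comparison map $(x,a)\mapsto xa$ is bijective because $(x,a)$ is identified with $(1,xa)$ via the morphism $x\colon xa\to a$), hence $X\otimes_MA\cong\mathrm{colim}_{\mathcal{E}_A}X$ naturally in $X$, and filteredness --- not cofilteredness --- of $\mathcal{E}_A$ is the relevant condition. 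Your translation of the three filteredness axioms into (F1)--(F3) is also accurate; in particular the coequalisation axiom unwinds to (F2) exactly as you say, since the composite of $m_1\colon b\to a$ with $m\colon a\to\tilde a$ is the element $m_1m$, and equality of the two composites is literally $m_1m=m_2m$. The one caveat is that the implication ``$(-)\otimes_MA$ preserves finite limits $\Rightarrow$ $\mathcal{E}_A$ filtered'' is not argued but quoted from \cite{mm}; since the paper quotes the entire lemma from \cite{m}, your write-up is no less self-contained than the original, but a genuinely complete proof would require carrying out the direct verification you sketch at the end, of which the equalizer case (the one resting on (F2)) is the only step needing real work.
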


\begin{Exm}\label{222} i) Clearly, $A=M$ is always filtered and corresponds to the canonical point, denoted by ${\sf p}_M$. Thus, ${\sf p}_M^*$ is the forgetful functor $\Sets_M\to \Sets$.
	
ii) We take $M=\{1,t\}$, $t^2=t$. In this case, the singleton, which is a terminal object in $\Sets_M$, is a filtered $M$-set. 
\end{Exm}

\subsection{Points and prime ideals}

If $M$ is commutative and $\mathfrak p$ is a prime ideal, then the localisation $M_{\mathfrak p}$ is filtered \cite{points} and in this way, one obtains an injective  map
$$Spec(M)\to \F_M.$$
The inverse image functor corresponding to the point, associated to the filtered $M$-set $M_{\mathfrak p}$, sends a right $M$-set $X$ to the localisation $X_{\mathfrak p}$, considered as a set.

Moreover, if $M$ is finitely generated, the map is a bijection.
\\

For a monoid $M$, denote by $M^{com}$ the maximal commutative and by $M^{sl}$ the maximal semilattice quotient respectively. As a semilattice is commutative by definition, we have natural surjective homomorphisms $M\to M^{com}\to M^{sl}$.

According to \cite{p1}, for any commutative monoid $M$, the induced map
$$Spec(M^{sl})\xto{\cong} Spec(M)$$
is bijective and furthermore, there is an injective map \cite{p1}
$$M^{sl}\to Spec(M^{sl})\cong Spec(M).$$
This is bijective if $M$ is commutative and finitely generated. It follows that under these assumptions, we have
$$|\F_M|=|M^{sl}|.$$

\begin{Exm}\label{231} Let $M=\{1,t\}$, $t^2=t$ as in part ii) of Example \ref{222}. Then $\F_M$ has only two elements, one corresponds to the filtered $M$-set $M$ and another one to the singleton.
\end{Exm}

\subsection{Induced points}

We will discuss some functorial properties of $\F_M$ and its consequences. We start with the following well-known fact:

\begin{Le} Let $f:M\to M'$ be a monoid homomorphism. For any filtered left $M$-set $A$, the left $M'$-set $M'\otimes_M A$ is filtered.
\end{Le}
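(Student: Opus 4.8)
The plan is to check conditions (F1)--(F3) of Lemma~\ref{Fcond} for the left $M'$-set $B:=M'\otimes_M A$, where $M'$ is viewed as a right $M$-set through $f$. I write $[x,a]\in B$ for the class of $(x,a)\in M'\times A$; thus $B$ is the coequaliser of $M'\times M\times A\toto M'\times A$ and $[xf(k),a]=[x,ka]$ for all $x\in M'$, $k\in M$, $a\in A$. Conditions (F1) and (F3) are quick: $B\neq\emptyset$ because $A\neq\emptyset$ by (F1) for $A$ and $M'$ has a unit; and given $[x_1,a_1],[x_2,a_2]\in B$, (F3) for $A$ supplies $u_1,u_2\in M$ and $a\in A$ with $u_ia=a_i$, whence $[x_i,a_i]=[x_if(u_i),a]=\bigl(x_if(u_i)\bigr)\cdot[1,a]$, so the element $[1,a]\in B$ together with the scalars $x_1f(u_1),x_2f(u_2)\in M'$ witnesses (F3) for $B$.

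The real content is (F2), and here I would first isolate a normal form for equality in $B$ that uses filteredness of $A$: \emph{for $u,v\in M'$ and $a,a'\in A$, one has $[u,a]=[v,a']$ in $B$ if and only if there exist $\tilde a\in A$ and $m,n\in M$ with $m\tilde a=a$, $n\tilde a=a'$ and $uf(m)=vf(n)$.} The ``if'' implication is a one-line manipulation valid for any $A$: $[u,a]=[u,m\tilde a]=[uf(m),\tilde a]=[vf(n),\tilde a]=[v,n\tilde a]=[v,a']$. For the converse, let $R$ be the relation on $M'\times A$ given by the displayed right-hand side. It is visibly reflexive and symmetric, and it contains every generating pair $\bigl(xf(k),a\bigr)\sim(x,ka)$ of the coequaliser (take $\tilde a=a$, $m=1$, $n=k$). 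Transitivity of $R$ is exactly where (F2) and (F3) for $A$ are used: from witnesses $(\tilde a_1,m_1,n_1)$ for $(u,a)\mathrel R(v,b)$ and $(\tilde a_2,m_2,n_2)$ for $(v,b)\mathrel R(w,c)$ one has $n_1\tilde a_1=b=m_2\tilde a_2$; pull $\tilde a_1,\tilde a_2$ back to a common $d\in A$ via (F3) for $A$, say $s_1d=\tilde a_1$, $s_2d=\tilde a_2$, so that $n_1s_1d=m_2s_2d$; then (F2) for $A$ yields $t\in M$ and $\tilde d\in A$ with $t\tilde d=d$ and $n_1s_1t=m_2s_2t$, and one checks that the triple $(\tilde d,\;m_1s_1t,\;n_2s_2t)$ witnesses $(u,a)\mathrel R(w,c)$. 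Hence $R$ is an equivalence relation containing the generating relations of the coequaliser, so it coincides with them, which is the claim.

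Granting the normal form, (F2) for $B$ drops out. If $p_1,p_2\in M'$ and $b=[x,a]\in B$ satisfy $p_1b=p_2b$, i.e. $[p_1x,a]=[p_2x,a]$, the normal form gives $\tilde a\in A$ and $m,n\in M$ with $m\tilde a=a=n\tilde a$ and $p_1xf(m)=p_2xf(n)$. Applying (F2) for $A$ to the coincidence $m\tilde a=n\tilde a$ produces $t\in M$ and $\hat a\in A$ with $t\hat a=\tilde a$ and $mt=nt$. Now set $p:=xf(mt)\in M'$ and $\tilde b:=[1,\hat a]\in B$. Then $p\tilde b=[xf(m)f(t),\hat a]=[xf(m),\tilde a]=[x,m\tilde a]=b$, while $p_1p=p_1xf(m)f(t)=p_2xf(n)f(t)=p_2xf(nt)=p_2xf(mt)=p_2p$ using $nt=mt$; this is precisely (F2) for $B$.

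The only step I expect to require genuine care is the normal-form lemma, and inside it the transitivity of $R$; everything else is bookkeeping on top of Lemma~\ref{Fcond}. I should note a shorter, if less self-contained, alternative: every filtered left $M$-set is a filtered colimit of copies of the free $M$-set $M$, and since $M'\otimes_M(-)$ is cocontinuous with $M'\otimes_M M\cong M'$, the object $B$ is a filtered colimit of copies of the free $M'$-set $M'$ and is therefore filtered. This route trades Lemma~\ref{Fcond} for the (equally standard) description of filtered $M$-sets as ind-objects of the one-object site.
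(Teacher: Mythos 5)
Your proof is correct. The paper states this lemma as a well-known fact and offers no proof at all, so there is no argument of the paper to compare yours against; your direct verification of conditions (F1)--(F3) of Lemma \ref{Fcond} for $M'\otimes_M A$ is the standard self-contained route and it is complete. The only step with real content, the normal form for equality in $M'\otimes_M A$, is handled properly: your relation $R$ contains the generating pairs of the coequaliser, is reflexive and symmetric, and its transitivity is exactly where (F3) and (F2) for $A$ enter --- the witness $(\tilde d,\; m_1s_1t,\; n_2s_2t)$ does check out, since $m_1s_1t\tilde d=a$, $n_2s_2t\tilde d=c$ and $uf(m_1s_1t)=vf(n_1s_1t)=vf(m_2s_2t)=wf(n_2s_2t)$ --- while the one-line ``if'' computation shows $R$ is contained in the tensor relation, so the two coincide. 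The subsequent deduction of (F2) for $B$, with $p=xf(mt)$ and $\tilde b=[1,\hat a]$, and the quick checks of (F1) and (F3), are likewise correct. Your alternative sketch (filtered $M$-sets are filtered colimits of copies of $M$, and $M'\otimes_M(-)$ is cocontinuous with $M'\otimes_M M\cong M'$) is equally valid and is presumably the sense in which the paper treats the statement as well known.
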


The $M'$-set constructed in the Lemma is said to be \emph{induced from $A$ via the homomorphism $f$}. In this way, one obtains a functor
$$\Pts(f): {\Pts}(M) \to {\Pts}({M'})$$
and the induced map
$$\F_f:\F_M\to \F_{M'}.$$

\begin{Exm} Let $e$ be an idempotent of a monoid $M$. We have a homomorphism of monoids $\eta:\{1,t\}\to M$, where $t^2=t$ and $\eta(t)=e$. The singleton is filtered over $\{1,t\}$, see Example \ref{231}. Thus, it induces a filtered $M$-set, which is easily seen to be $Me$. The fact that an idempotent of $M$ gives rise to a point of $\Sets_M$ was already recently observed in  \cite{rogers},\cite{HR}.

One of our main result claims that if $M$ is finite, any filtered $M$ set is isomorphic to $Me$ for an idempotent $e\in M$; that is, induced from the submonoid $\{1,e\}$, see Theorem \ref{7.102010} below.
\end{Exm}

Denote by $q:M\to M^{sl}$ the quotient map. As we said, the induced map
$$\F_q :\F_M\to  \F_{M^{sl}}$$
is bijective if $M$ is finitely generated and commutative. In the noncommutative setting, this induced map is not a bijection in general, even if $M$ is finite. However, in this case, we will show that $|\F_M|$ is finite, see Theorem \ref{132'30.9} below. We also have the following:

\begin{Pro}\label{142} Let $M$ be a finite monoid. The canonical homomorphism
$$\F_q:\, \F_M\to  \F_{M^{sl}}$$
is surjective.
\end{Pro}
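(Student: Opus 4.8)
The plan is to exhibit, for each point of $\Sets_{M^{sl}}$, a preimage point of $\Sets_M$ under $\F_q$, using the concrete description of points of a semilattice and the fact that $M$ is finite. Recall that $M^{sl}$ is a finite semilattice, hence a finite commutative monoid, so by the material of Section 2.2 (the bijection $\Spec((M^{sl})^{sl})\cong\Spec(M^{sl})$ together with $M^{sl}\to\Spec(M^{sl})$ being bijective for finitely generated commutative $M$, and $(M^{sl})^{sl}=M^{sl}$) we have $|\F_{M^{sl}}|=|M^{sl}|$, and the points of $\Sets_{M^{sl}}$ are precisely the localisations $M^{sl}_{\mathfrak p}$ at prime ideals $\mathfrak p$. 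Concretely, a prime ideal of the semilattice $M^{sl}$ is the complement of a prime filter, and localising at it produces a filtered $M^{sl}$-set which one checks is isomorphic to $M^{sl}\bar e$ for a suitable idempotent $\bar e\in M^{sl}$ (every element of a semilattice is idempotent). So every point of $\Sets_{M^{sl}}$ has the form ${\sf p}_{M^{sl}\bar e}$ for some $\bar e\in M^{sl}$.

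The key step is then a lifting statement: given $\bar e\in M^{sl}$, find an idempotent $e\in M$ with $q(e)=\bar e$, and show that $\F_q$ sends the class of ${\sf p}_{Me}$ to the class of ${\sf p}_{M^{sl}\bar e}$. For the lifting of idempotents: choose any $a\in M$ with $q(a)=\bar e$; since $M$ is finite, the submonoid generated by $a$ contains a unique idempotent $e=a^k$ (the standard fact that a finite monoid is eventually idempotent on cyclic subsemigroups), and $q(e)=q(a)^k=\bar e^k=\bar e$ because $\bar e$ is idempotent in the semilattice. For the functoriality computation, by the Lemma preceding Proposition 2.3.1, $M^{sl}\otimes_M Me$ is filtered, and one computes $M^{sl}\otimes_M Me\cong M^{sl}q(e)=M^{sl}\bar e$ as left $M^{sl}$-sets — this is a direct tensor-product identification using that $Me$ is a quotient of $M$ and tensoring is right exact. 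Hence $\F_q(\text{class of }{\sf p}_{Me})=\text{class of }{\sf p}_{M^{sl}\bar e}$, and since the latter classes exhaust $\F_{M^{sl}}$, surjectivity follows.

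The main obstacle I expect is not any single hard estimate but rather getting the identification of points of the semilattice topos cleanly: one must be sure that every point of $\Sets_{M^{sl}}$ really is of the form ${\sf p}_{M^{sl}\bar e}$, i.e. that the injective map $M^{sl}\to\F_{M^{sl}}$ coming from Section 2.2 is in fact a bijection and that its image is described by the sets $M^{sl}\bar e$. Since $M^{sl}$ is finite (being a quotient of the finite $M$) and commutative, the finitely-generated-commutative results quoted in the excerpt apply and give $|\F_{M^{sl}}|=|M^{sl}|$; matching the $|M^{sl}|$ filtered sets $M^{sl}\bar e$ (which are pairwise non-isomorphic because $\bar e\mapsto M^{sl}\bar e$ is injective on a semilattice) against the $|\F_{M^{sl}}|$ iso-classes of points forces every point to be one of these. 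Once that bookkeeping is in place, the idempotent-lifting and the tensor computation are routine, and surjectivity of $\F_q$ is immediate.
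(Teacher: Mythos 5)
Your proof is correct and follows essentially the same route as the paper: lift each idempotent of $M^{sl}$ to an idempotent $e\in M$ (the paper cites Steinberg's Lemma 1.6 where you prove the power trick directly), use the known bijection $M^{sl}\cong \F_{M^{sl}}$ from Section 2.2, and observe via the induced-point functoriality that $Me$ maps to $M^{sl}q(e)$. The paper packages this as a commutative square $\mathsf{Idem}(M)\to\F_M\to\F_{M^{sl}}$ versus $\mathsf{Idem}(M)\to M^{sl}\to\F_{M^{sl}}$, but the content is the same.
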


\begin{proof} Denote by $\mathsf{Idem}(M)$ the set of all idempotents of $M$. Then $\mathsf{Idem}(M)\ni e\mapsto Me$ yields the map	$\mathsf{Idem}(M)\to \F_M$. Since $\mathsf{Idem}(M^{sl})=M^{sl}$, the functoriality yields the following commutative diagram
$$\xymatrix{ \mathsf{Idem}(M)\ar[r]\ar[d] & \F_M\ar[d]\\
		     M^{sl}\ar[r]& \F_{M^{sl}}.}$$
The bottom arrow is a bijection and the left vertical map is surjective, thanks to \cite[Lemma 1.6] {st}. It follows that the left vertical map is surjective as well.
\end{proof}

\section{Points of $\Sets_M$ and idempotents of $M$}

\subsection{Category $\hi(M)$}

Let $m\in M$ be an element. We have a natural left ideal $Mm$, which can also be considered as a left $M$-set. We have the following well-known fact:

\begin{Le} \label{eX} Let $e\in M$ be an idempotent. For any left $M$-set $X$, we have a bijection
$$eX\cong Hom_M(Me,X),$$
which sends an element $ex\in eX$ to the homomorphism $\alpha_{ex}:Me\to X$, given by $\alpha_{ex}(me)=mex$.  
\end{Le}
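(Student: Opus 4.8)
The plan is to exhibit an explicit inverse to the assignment $ex \mapsto \alpha_{ex}$ and check that the two maps compose to the identity in both directions; idempotency $e^2=e$ is what makes everything fit together.

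First I would check that $\alpha_{ex}$ is a well-defined morphism of left $M$-sets. If $me = m'e$ in $Me$, then $mex = m'ex$, so the formula $\alpha_{ex}(me)=mex$ does not depend on the chosen representative; and for $n\in M$ one has $\alpha_{ex}(n\cdot me)=\alpha_{ex}((nm)e)=(nm)ex=n\cdot\alpha_{ex}(me)$, so $\alpha_{ex}\in\Hom_M(Me,X)$. This gives a map $eX\to\Hom_M(Me,X)$.

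In the other direction I would define $\beta:\Hom_M(Me,X)\to eX$ by $\beta(\varphi)=\varphi(e)$. This lands in $eX$ because $\varphi(e)=\varphi(ee)=e\,\varphi(e)$. Now I compute both composites. Starting from $ex\in eX$: $\beta(\alpha_{ex})=\alpha_{ex}(e)=\alpha_{ex}(1\cdot e)=1\cdot e\cdot x = ex$, using $e=1\cdot e$ and that $ex=e(ex)$ since $e^2=e$. Starting from $\varphi\in\Hom_M(Me,X)$: for every $m\in M$, $\alpha_{\beta(\varphi)}(me)=me\cdot\varphi(e)=m\cdot(e\,\varphi(e))=m\cdot\varphi(e)=\varphi(me)$, where the second-to-last equality is again $e\,\varphi(e)=\varphi(ee)=\varphi(e)$ and the last is $M$-equivariance of $\varphi$; hence $\alpha_{\beta(\varphi)}=\varphi$. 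Thus $\alpha_{(-)}$ and $\beta$ are mutually inverse bijections.

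There is no real obstacle here: the only points requiring care are the well-definedness of $\alpha_{ex}$ on the quotient presentation of $Me$ and the systematic use of $e^2=e$ in verifying $\beta\circ\alpha_{(-)}=\id$ and $\alpha_{(-)}\circ\beta=\id$. I would present these verifications compactly, as above, and conclude.
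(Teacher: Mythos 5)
Your proof is correct and follows essentially the same route as the paper: the inverse is evaluation at $e$, with $\varphi(e)=\varphi(ee)=e\varphi(e)$ showing it lands in $eX$, and the paper simply leaves the two composite checks (which you write out, correctly using $e^2=e$) as immediate.
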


\begin{proof} Take any morphism of $M$-sets $\beta:Me\to X$. We have $\beta(e)=\beta(ee)=e\beta(e)$. Thus, $\beta(e)\in eX$ and $\beta\mapsto \beta(e)$ defines a map $Hom_M(Me,X)\to eX$, which obviously is inverse of the map $ex\mapsto \alpha_{ex}$.
\end{proof}

\begin{Co}\label{eMf} Let $e,f\in M$ be idempotents. We have $Hom_M(Me,Mf)=eMf.$
\end{Co}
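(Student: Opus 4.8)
The plan is to obtain this as an immediate specialisation of Lemma \ref{eX}. That lemma gives, for \emph{any} left $M$-set $X$, a bijection $eX \cong \Hom_M(Me,X)$, natural in the sense that $ex \mapsto \alpha_{ex}$ with $\alpha_{ex}(me) = mex$, and with inverse $\beta \mapsto \beta(e)$. Since $Mf$ is a left $M$-set, I would simply take $X = Mf$ and identify the set $eX = e\,(Mf)$ with the subset $eMf \subseteq M$.

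Concretely, the steps are: first, invoke Lemma \ref{eX} with $X = Mf$ to get the bijection $\Hom_M(Me, Mf) \cong e(Mf)$. Second, observe that $e(Mf) = \{\, emf : m\in M \,\} = eMf$ as a subset of $M$, and that this subset is stable under the right $M$-action inherited from $M$, so the identification is compatible with whatever extra structure one wishes to keep track of. Third, spell out the two maps in this special case: a homomorphism $\beta : Me \to Mf$ is sent to $\beta(e) \in eMf$ (using $\beta(e) = \beta(ee) = e\beta(e)$ and $\beta(e) \in Mf$), and conversely an element $emf \in eMf$ gives the homomorphism $me \mapsto me\cdot mf$; these are mutually inverse by the corresponding assertion in Lemma \ref{eX}. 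Hence $\Hom_M(Me,Mf)$ may be \emph{identified with} $eMf$, which is the claimed equality.

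There is essentially no obstacle here: the only point requiring a word of care is that the right-hand side $eMf$ should be read as the image $e(Mf)$ of the left $M$-set $Mf$ under multiplication by $e$, and that this coincides with the two-sided expression $eMf \subseteq M$; once this bookkeeping is made explicit, the statement is just Lemma \ref{eX} applied to $X = Mf$.
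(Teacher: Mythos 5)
Your proof is correct and is exactly the route the paper intends: the corollary is just Lemma \ref{eX} specialised to $X=Mf$, with the observation that $e(Mf)=eMf$ inside $M$. The only cosmetic issue is the reuse of the letter $m$ when you describe the inverse map (better to say $m'e\mapsto m'emf$ for the element $emf\in eMf$), but the argument itself matches the paper's.
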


We define the category $\hi(M)$ as follows: Objects are idempotents of $M$ and a morphism from $e$ to $f$ is an element of $M$ of the form $fme$, $m\in M$. That is
$$\hom_{\hi(M)}(e,f)=fMe.$$
The composition is given by the multiplication in $M$, i.e. $(gnf)\circ (fme)=gnfme$. That is, the composite of arrows
$$e\xto{fme}f\xto{gnf}g$$
is equal to $e\xto{gnfme}g$.

The identity morphism of $e$ is just $e$. It is clear that
$$\hi(M^{op})=(\hi(M))^{op}.$$

Recall the Green relations $\mathfrak{L}, \mathfrak{R}, \mathfrak{I}.$  By definition, we have $e\mathfrak{L} e'$ provided $Me=Me'$,  $e\mathfrak{R} e'$ if $eM=e'M$ and $e\mathfrak{I} e'$ if $MeM=Me'M$. We let 
$\mathsf{Idem}_\mathfrak{K}(M)$ be the corresponding quotient set, where $\mathfrak{K}\in \{\mathfrak{L}, \mathfrak{R}, \mathfrak{I}\}.$ 

It is clear that if $e\mathfrak{L} e'$ and $f\mathfrak{R} f'$, then
$$\hom_{\hi(M)}(e,f)=\hom_{\hi(M)}(e',f) \quad  \hom_{\hi(M)}(e,f)=\hom_{\hi(M)}(e,f').$$

\begin{Le} The assignment $e\mapsto Me$ induces a contravariant duality between the category $\hi(M)$, and, the full subcategory of $_M\Sets$, consisting of objects of the form $Me$.
\end{Le}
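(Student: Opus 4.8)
The statement asserts a contravariant equivalence (duality) between $\hi(M)$ and the full subcategory of $_M\Sets$ on objects of the form $Me$. The plan is to exhibit the functor explicitly, check it is well defined and contravariant, and then verify it is fully faithful and essentially surjective.

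First I would define the functor $F:\hi(M)^{op}\to{}_M\Sets$ on objects by $F(e)=Me$. For a morphism $e\xto{fme}f$ in $\hi(M)$ — recall $\hom_{\hi(M)}(e,f)=fMe$ — I must produce an $M$-map $Mf\to Me$. By Corollary \ref{eMf} (or directly by Lemma \ref{eX}), $\Hom_M(Mf,Me)=fMe$, and the element $fme$ corresponds under that identification to the right-multiplication map $\rho_{fme}:Mf\to Me$, $xf\mapsto xfme$. So I set $F(fme)=\rho_{fme}$. I would then check functoriality: the identity $e\in\hom_{\hi(M)}(e,e)$ goes to $\rho_e=\id_{Me}$, and for $e\xto{fme}f\xto{gnf}g$ the composite in $\hi(M)$ is $gnfme$, while in $_M\Sets$ we must have $F(fme)\circ F(gnf)=\rho_{fme}\circ\rho_{gnf}$; evaluating on $xg\in Mg$ gives $xg\mapsto xgnf\mapsto xgnfme$, which is exactly $\rho_{gnfme}(xg)$. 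Thus $F$ reverses composition, confirming contravariance.

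Next, fully faithfulness. On hom-sets, $F$ sends $\hom_{\hi(M)}(e,f)=fMe$ to $\Hom_{{}_M\Sets}(F(f),F(e))=\Hom_M(Mf,Me)$, and by Corollary \ref{eMf} the latter equals $fMe$; one checks the map $F$ induces on hom-sets is precisely the identity under these identifications, so it is a bijection. Essential surjectivity is immediate: every object of the target subcategory is by definition of the form $Me$ for some idempotent $e$, i.e. $F(e)$. A functor that is fully faithful and essentially surjective is an equivalence, giving the claimed duality. (One mild point: in Lemma \ref{eX} the bijection $eX\cong\Hom_M(Me,X)$ is stated for left $M$-sets $X$; here $X=Mf$, and $eX=eMf$, consistent with Corollary \ref{eMf}.)

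I do not anticipate a serious obstacle: everything is a direct unwinding of Lemma \ref{eX} and Corollary \ref{eMf}, which already do the real work of identifying $\Hom_M(Me,X)$ with $eX$. The only place demanding a little care is bookkeeping the left/right conventions — making sure the assignment $fme\mapsto\rho_{fme}$ (right multiplication) is $M$-linear for the left action and that the composition in $\hi(M)$, written $(gnf)\circ(fme)=gnfme$, matches the reversed composition of the $\rho$'s — but this is the short computation sketched above. An alternative, even terser route is to note that $e\mapsto Me$ is the object map of the Yoneda-type functor $\hi(M)\to\mathrm{Fun}(\hi(M)^{op},\Sets)$ composed with the identification of representables, but the direct argument above is cleaner and self-contained.
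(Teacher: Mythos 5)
Your proof is correct and follows essentially the same route as the paper, which simply declares the lemma a direct consequence of Corollary \ref{eMf}; you have just written out the details (right-multiplication maps, contravariant functoriality, full faithfulness, essential surjectivity) that the paper leaves implicit. The bookkeeping of $\hom_{\hi(M)}(e,f)=fMe=\Hom_M(Mf,Me)$ and the composition check are done correctly.
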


\begin{proof} This is a direct consequence of Corollary \ref{eMf}.
\end{proof}

\begin{Le} Idempotents $e$ and $f$ are isomorphic in $\hi(M)$ if and only if there are $a,b\in M$, such that $ab=e$ and $ba=f$. This happens if and only if $e\mathfrak{I} f$. Thus, the set of iso-classes of the category $\hi(M)$ is bijective to the set $\mathsf{Idem}_\mathfrak{I}(M)$.
\end{Le}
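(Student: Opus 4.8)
The plan is to prove the three claimed equivalences in sequence, pivoting through the "common refinement" description of isomorphism that is natural in this setting. First I would establish that $e$ and $f$ are isomorphic in $\hi(M)$ if and only if there exist $a,b\in M$ with $ab=e$ and $ba=f$. One direction is immediate: if $a=ea f$ viewed as a morphism $f\to e$ and $b=fbe$ viewed as a morphism $e\to f$, then the two composites being identities say precisely $ab=e$ in $eMe$ and $ba=f$ in $fMf$. Conversely, suppose $ab=e$ and $ba=f$ for arbitrary $a,b\in M$; the subtlety is that $a$ and $b$ need not a priori lie in the right "two-sided corners" $eMf$ and $fMe$. I would fix this by replacing $a$ with $a':=eaf=ab\cdot a\cdot ba = a\cdot(ba)\cdot(ba)$—more carefully, note $ea = (ab)a = a(ba) = af$, so in fact $ea=af=eaf=:a'$ already lies in $eMf$, and similarly $fb=be=fbe=:b'$ lies in $fMe$; then $a'b'=(ea)(fb)=ea\cdot fb$, and using $ea=af$ one computes $a'b' = af\cdot fb = a(ba)b=(ab)(ab)=ee=e$, and symmetrically $b'a'=f$, so $a',b'$ are mutually inverse morphisms in $\hi(M)$.

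Next I would show that the existence of such $a,b$ is equivalent to $e\,\mathfrak{I}\,f$, i.e. $MeM=MfM$. If $ab=e$, $ba=f$, then $e=ab\in MfM$ since $f=ba$ gives $ab = a(ba)b\cdot$(need $f$ in the middle): indeed $e = ee = (ab)(ab) = a(ba)b = afb \in MfM$, and symmetrically $f=bea\in MeM$, whence $MeM\subseteq MfM$ and $MfM\subseteq MeM$, so $MeM=MfM$. The converse is the only place where some genuine (but standard, Green-theoretic) work is needed: assuming $MeM=MfM$, I want to produce $a,b$ with $ab=e$, $ba=f$. From $e\in MfM$ write $e=xfy$ for some $x,y\in M$; then set $a:=exf$ (a morphism $f\to e$) and $b:=fye$ (a morphism $e\to f$). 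Then $ab = exf\cdot fye = ex f y e = e(xfy)e = e\cdot e\cdot e$? — this requires $xfy=e$, which we only know up to left/right multiplication by $e$; but $e(xfy)e = exfye$ and since $e=xfy$ we get $e\cdot e\cdot e = e$ once we observe $exfye = e(xfye)$ and $xfye$... here I would instead invoke the standard fact that in a monoid $MeM=MfM$ for idempotents $e,f$ implies $e$ and $f$ generate the same $\mathfrak{D}=\mathfrak{J}$-class and hence (idempotents in the same $\mathfrak{J}$-class of a monoid being $\mathfrak{D}$-equivalent) there is $g$ with $e\,\mathfrak{L}\,g\,\mathfrak{R}\,f$, and then the classical Green's-lemma construction yields the required mutually inverse pair. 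This Green-relations input is exactly the expected main obstacle; everything else is bookkeeping with idempotent corners.

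Finally, the statement about iso-classes follows formally: the objects of $\hi(M)$ are by definition the idempotents $\mathsf{Idem}(M)$, and we have just shown that the isomorphism relation on these objects coincides with $\mathfrak{I}$, so the set of isomorphism classes is by definition $\mathsf{Idem}_\mathfrak{I}(M)$. I would close by remarking that this is consistent with the preceding lemma identifying $\hi(M)$ with the opposite of the full subcategory of $_M\Sets$ on the objects $Me$, under which $\hi(M)$-isomorphism $e\cong f$ corresponds to an isomorphism of left $M$-sets $Me\cong Mf$; and indeed $Me\cong Mf$ as left $M$-sets is well known to be equivalent to $e\,\mathfrak{I}\,f$ for idempotents, giving an alternative route to the same conclusion should the direct monoid-element argument be preferred to be replaced by this categorical one.
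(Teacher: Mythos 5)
Your explicit computations are correct, and your route is genuinely different from the paper's: the paper disposes of the whole lemma by citing \cite[Theorem 1.11]{st}, whereas you prove the first equivalence and the easy half of the second by hand. Those parts are right and valid over an arbitrary monoid: from $ab=e$, $ba=f$ you get $ea=(ab)a=a(ba)=af$ and dually $fb=be$, so $a'=eaf=ea=af\in eMf$ and $b'=fbe=fb=be\in fMe$ satisfy $a'b'=af^2b=a(ba)b=(ab)^2=e$ and $b'a'=f$, i.e.\ they are mutually inverse morphisms of $\hi(M)$; and $e=(ab)^2=afb\in MfM$, $f=(ba)^2=bea\in MeM$ gives $MeM=MfM$. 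You were also right to abandon the naive choice $a=exf$, $b=fye$ from $e=xfy$: it yields $ab=e$ but nothing forces $ba=f$.

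The one substantive problem is the remaining implication $MeM=MfM\Rightarrow$ existence of such $a,b$, which you delegate to ``the standard fact that in a monoid'' $\mathfrak{I}$-equivalent idempotents lie in a common $\mathfrak{D}$-class because $\mathfrak{D}=\mathfrak{J}$. As stated, that fact is false: $\mathfrak{D}=\mathfrak{J}$ fails for general infinite monoids, even on idempotents, and with it this half of the lemma; it holds for \emph{finite} monoids by stability, which is the standing setting of the paper and exactly what the cited theorem of Steinberg supplies. So you must either invoke finiteness explicitly or insert the stability argument: writing $e=xfy$, the equality of $\mathfrak{J}$-classes of $e$, $f$ and $fy$ gives, by stability, $f\,\mathfrak{R}\,fy$ and $e\,\mathfrak{L}\,fy$, hence $e\,\mathfrak{D}\,f$, after which the classical Green's-lemma construction produces the required pair. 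The same caveat applies to your closing alternative: for idempotents, $Me\cong Mf$ as left $M$-sets is equivalent to $e\,\mathfrak{D}\,f$ in general, and to $e\,\mathfrak{I}\,f$ only when $\mathfrak{D}=\mathfrak{J}$, e.g.\ for finite $M$. With the finiteness hypothesis made explicit, your proof is a sound and more self-contained substitute for the paper's bare citation, since it isolates the only place where finiteness (stability) is actually used.
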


\begin{proof} A consequence of \cite[Theorem 1.11]{st}.
\end{proof}

\subsection{$M$-congruences}

For an equivalence relation $\rho$ on a set $S$, we denote by $q$ the canonical map $q:S\to S/\rho$.

Let $M$ be a monoid and $A$ a left $M$-set. An equivalence relation $\sim_\rho$ is called an $M$-\emph{congruence} if $a\sim b$ implies $ma\sim mb$ for all $m\in M$. It is clear that in this case, the quotient $A/\rho$ has an unique left $M$-set structure such that $q:M\to M/\rho$ is an $M$-set map.

We will use this terminology, to distinguish between congruences on a monoid $M$, in the world of monoids, and congruences on $M$, considered as a left $M$-set, using the multiplication in $M$.

\begin{Le}\label{111.30.9} Let $\sim_\rho$ be an $M$-congruence on a left $M$-set $A$. For any $a\in A$, the subset
$$K_\rho(a)=\{x\in M| a \sim xa \}$$
is a submonoid of $M$.
\end{Le}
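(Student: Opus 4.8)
The claim is that $K_\rho(a)=\{x\in M\mid a\sim_\rho xa\}$ is a submonoid of $M$. There are just two things to verify: that $1\in K_\rho(a)$, and that $K_\rho(a)$ is closed under multiplication. The first is immediate, since $1a=a$ and $\sim_\rho$ is reflexive. The bulk of the (very short) argument is the closure property, and the key input is that $\sim_\rho$ is an $M$-congruence, not merely an equivalence relation.

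For closure, I would take $x,y\in K_\rho(a)$, so that $a\sim_\rho xa$ and $a\sim_\rho ya$, and aim to show $a\sim_\rho (xy)a$. The natural route is: from $a\sim_\rho ya$ and the congruence property applied with the element $x\in M$, we get $xa\sim_\rho x(ya)=(xy)a$. Combining this with $a\sim_\rho xa$ and transitivity of $\sim_\rho$ yields $a\sim_\rho (xy)a$, i.e. $xy\in K_\rho(a)$. That is the whole argument.

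I do not anticipate any real obstacle here; the only point to be slightly careful about is the order of multiplication — one must act by $x$ on the relation $a\sim_\rho ya$ (using the \emph{left} $M$-action and left $M$-congruence), rather than trying to act on $a\sim_\rho xa$, since $M$ need not be commutative and we want to land on $(xy)a$ with the factors in that order. Associativity of the $M$-action, $x(ya)=(xy)a$, is then what identifies the result.

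One may also remark that no assumption on $M$ (finiteness, commutativity) or on $A$ is needed, and that in general $K_\rho(a)$ is only a submonoid, not a subgroup; this observation is exactly the kind of stabiliser-type construction that will feed into the analysis of filtered $M$-sets and their points in the sequel.
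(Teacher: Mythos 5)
Your proof is correct and coincides with the paper's argument: both establish $1\in K_\rho(a)$ by reflexivity and then apply the congruence property with $x$ to $a\sim_\rho ya$, obtaining $xa\sim_\rho (xy)a$, and conclude by transitivity via $a\sim_\rho xa$. No differences worth noting.
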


\begin{proof} Since $\sim$ is an equivalence relation, we have $a\sim a=1\cdot a$. Thus, $1\in K$. Assume $x,y\in K(a)$. That is, $a\sim ya$ and $a\sim xa$. Since $\sim$ is $M$-congruence, we have $xa\sim xya$. It follows that $a\sim xa\sim xya$. Hence, $xy\in K(a)$. 
\end{proof}

\subsection{Finitines of points}

We start with the following observation:

\begin{Le}\label{sourceish} Let $A$ be a filtered left $M$-set and $a_1,\cdots, a_k$ elements of $A$. There are $m_1,\cdots, m_k\in M$ and $c\in A$, such that $a_i=m_ic$ for all $1\leq i\leq k$.
\end{Le}

\begin{proof} The case $n=1$ is clear and $n=2$ is just condition F3. We proceed by induction. Assume there are $n_1,\cdots, n_{k-1}\in M$ and $b\in A$, such that $a_i=n_ib$ for all $1\leq i\leq n-1$. By the case $n=2$, we can choose $c\in A$ and $n',m_k\in M$, such that $n'c=b$ and $m_kc=a_k$. We put $m_i=n'n_i$ for $i=1, \cdots k-1$ to obtain $m_ic=a_i$ for all $1\leq i\leq k$.
\end{proof}

\begin{Th}\label{132'30.9} Let $M$ be a finite monoid.
	\begin{itemize}
	\item [i)] Any filtered $M$-set $A$ is cyclic, that is, generated by a single element. In particular, we have $|A|\leq |M|$.
	\item [ii)] The set $\F_M$ is finite.
	\end{itemize}
\end{Th}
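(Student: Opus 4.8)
The plan is to prove (i) first and then derive (ii) as an easy corollary. For (i), I would start with a filtered left $M$-set $A$. By Lemma \ref{sourceish}, any finite list of elements of $A$ has a common ``source'': there is $c\in A$ and $m_i\in M$ with $a_i=m_ic$. If $A$ were finite, this would immediately give that $A$ is cyclic, generated by any such universal source $c$; so the crux is to show $A$ itself is finite (or at least bounded by $|M|$). The idea is to consider, for a fixed $a\in A$, the orbit map $M\to A$, $m\mapsto ma$, and to analyze the fibers via an $M$-congruence. Namely, define $a\sim b$ on the sub-$M$-set $Ma$ by the kernel pair of this orbit map, i.e.\ $ma\sim m'a$ whenever... — more precisely, I would show that the equivalence relation on $M$ given by $m\sim m' \iff ma=m'a$ is an $M$-congruence (left multiplication respects it), and then use condition (F2) to control it.

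The key step is the following: pick $a\in A$ that generates a maximal cyclic sub-$M$-set $Ma$ (possible since $|M|<\infty$ forces the poset of cyclic subsets to have maximal elements, or one can take $c$ from Lemma \ref{sourceish} applied to a generating set if $A$ is known finite — but to avoid circularity I would instead argue directly). Given any $b\in A$, by (F3) there are $m_1,m_2\in M$ and $d\in A$ with $m_1d=a$, $m_2d=b$; then $Ma=Mm_1d\subseteq Md$, so by maximality $Ma=Md$, whence $b\in Md=Ma$. This shows $A=Ma$, i.e.\ $A$ is cyclic. To make the maximality argument rigorous without assuming $A$ finite a priori: the cyclic sub-$M$-sets of $A$ form a poset under inclusion; each is a quotient of $M$ hence has at most $|M|$ elements; and for any two elements $a,b$, condition (F3) shows $Ma$ and $Mb$ are both contained in a common cyclic $Md$. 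Hence the cyclic subsets are directed upward, and — this is the point that needs care — I must rule out an infinite strictly increasing chain. But each cyclic subset has size $\le|M|$, so there is an absolute bound on sizes, hence a maximal one exists; call its generator $c$. Then by the (F3) argument above, $A=Mc$, giving $|A|=|Mc|\le|M|$. The main obstacle is precisely this finiteness-before-cyclicity issue: one must present the maximality/directedness argument in the right order so as not to assume what is being proved; invoking the uniform bound $|Mc|\le|M|$ on all cyclic subsets is what breaks the potential circularity.

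For (ii), once (i) is established, every filtered $M$-set is isomorphic to a cyclic $M$-set $M/\rho$ for some left $M$-set congruence $\rho$ on $M$. Since $|M|<\infty$, there are only finitely many equivalence relations on the underlying set of $M$, hence only finitely many such quotients up to the data of $\rho$, and in particular only finitely many isomorphism classes. Therefore $\F_M=\sF_{\Sets_M}$, the set of isomorphism classes of points, is finite. This last step is genuinely routine: a finite set has finitely many partitions, and each filtered $M$-set, being cyclic, is a quotient of $M$ as a left $M$-set.
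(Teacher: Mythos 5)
Your argument is correct, but for part (i) it takes a genuinely different route from the paper. The paper stays with Lemma \ref{sourceish}: applying it to any $k$ pairwise distinct elements $a_1,\dots,a_k$ gives $a_i=m_ic$, and if $k>|M|$ the pigeonhole principle forces $m_i=m_j$ for some $i\neq j$, hence $a_i=a_j$, a contradiction; thus $|A|\leq |M|$, and then applying Lemma \ref{sourceish} once more to the full (now finite) list of elements of $A$ produces a single generator $c$. In particular the circularity you worried about does not arise there: finiteness is obtained first, cyclicity second. Your proof instead bypasses both Lemma \ref{sourceish} and the pigeonhole step: you note that every cyclic subset $Ma$, being the image of $m\mapsto ma$, has at most $|M|$ elements, pick $c$ with $|Mc|$ maximal, and use (F3) for a pair (the directedness of cyclic subsets) to conclude $Mc\subseteq Md\ni b$ forces $Md=Mc$, so $A=Mc$ and $|A|\leq|M|$. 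This is a clean and complete argument (only note explicitly that (F1) is what guarantees a cyclic subset exists at all); what it buys is independence from the induction in Lemma \ref{sourceish}, while the paper's version is shorter given that that lemma is already established. For part (ii) your reasoning — a cyclic filtered $M$-set is a quotient $M/\rho$ by an $M$-congruence, and a finite $M$ carries only finitely many equivalence relations, so there are finitely many isomorphism classes of filtered $M$-sets and hence $\F_M$ is finite — is exactly the content behind the paper's ``obvious consequence of i)''.
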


\begin{proof} i) Assume there are $k$ distinguished elements of $A$, say $a_1,\cdots, a_{k}$. By Lemma \ref{sourceish}, we can find $m_1,\ldots, m_k\in M$ and $c\in A$, such that $a_i=m_ic$, for all $1\leq i\leq k$. If $|M|<k$, we see that there are $i\not =j$, such that $m_i=m_j$. It follows that
$$a_i=m_ic=m_jc=a_j.$$
This contradicts our assumptions on the $a_i$'s. Hence, $k\leq |M|$. This implies $|A|\leq |M|$. Moreover, by taking $a_1,\cdots, a_{k}$ to be all the elements of $A$, we see that $A$ is generated by $c$ and hence, it is cyclic.

ii) This is an obvious consequence of i).
\end{proof}

\subsection{$F$-monoids and $F$-submonoids}

We call a monoid $K$ an \emph{$F$-monoid}, if for any $m,n\in K$, there exits an $x\in K$, such that 
$$mx=nx.$$
Clearly, $K=\{1,e\}$ is an $F$-monoid, where $e^2=e$. More generally, if $M$ is a semilattice, then $M$ is an $F$-monoid (we can take $x=mn$).

Another class of $F$-monoids (which is a generalisation of semilattices in the finite case) are monoids that have a right zero. That is, an element $\varrho$, such that $x\varrho=\varrho$ for all $x\in K$. Clearly, if such a $\varrho$ exists, it is unique and an idempotent. Our next goal is to show that the converse is also true, if $M$ is finite. We will need the following lemma.

\begin{Le}\label{141} Let $K$ be an $F$-monoid. For any finite collection of elements $m_1,\cdots, m_k$ of $K$, there exists an $x\in K$, such that $m_ix =x$ for all $i=1,\cdots,k$.
\end{Le}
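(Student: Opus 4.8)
The plan is to prove Lemma~\ref{141} by induction on $k$, using the defining property of $F$-monoids as the engine and Lemma~\ref{141} itself (in the $k-1$ case) together with one extra application of the $F$-condition to patch the new element in.

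First I would dispose of the base case $k=1$: given $m_1$, the $F$-condition applied to the pair $m_1, 1 \in K$ produces some $y \in K$ with $m_1 y = 1\cdot y = y$, which is exactly what is wanted. For the inductive step, suppose the statement holds for collections of size $k-1$, and let $m_1,\dots,m_k \in K$ be given. By the inductive hypothesis there is some $y \in K$ with $m_i y = y$ for all $i = 1,\dots,k-1$. Now this $y$ need not satisfy $m_k y = y$, so I would apply the $F$-condition to the pair $m_k, 1$ \emph{after} twisting by $y$: more precisely, apply the inductive hypothesis not to $m_1,\dots,m_{k-1}$ alone but arrange things so that the element produced absorbs $m_k$ as well. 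The cleanest route: first get $y$ absorbing $m_1,\dots,m_{k-1}$; then apply the $F$-condition to the two elements $m_k y$ and $y$ of $K$ to find $z \in K$ with $(m_k y) z = y z$; finally set $x = y z$. Then $m_k x = m_k y z = y z = x$, and for $i \le k-1$ we have $m_i x = m_i y z = y z = x$ since $m_i y = y$. Hence $x$ absorbs all of $m_1,\dots,m_k$.

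I do not anticipate a serious obstacle here; the only point requiring a little care is making sure the twist-then-absorb order is right, i.e.\ that absorbing the first $k-1$ elements is not destroyed by the final application of the $F$-condition. This is guaranteed because once $m_i y = y$, right-multiplying by any $z$ preserves the identity $m_i (yz) = (m_i y) z = y z$. So the induction closes, and the lemma follows.

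(One could alternatively phrase the whole argument as a single induction where at each stage one replaces the current ``absorbing candidate'' $x$ by $xz$ for a suitable $z$ killing the next element $m_j$, starting from $x = 1$; this is the same proof organised as a loop rather than as nested induction, and either presentation is fine.)
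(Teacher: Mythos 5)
Your proof is correct and takes essentially the same route as the paper: induction on $k$, using the $F$-condition to absorb the new element and the observation that right multiplication by $z$ preserves the identities $m_i y = y$. The only (harmless) difference is that you apply the $F$-condition directly to the pair $m_k y,\, y$, whereas the paper routes the inductive step through a separately proved $k=2$ case; the two arguments are the same in substance.
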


\begin{proof} We proceed by induction. Let $k=1$. In this case, the assertion follows directly from the definition of an $F$-monoid, by taking $m=m_1$ and $n=1$. Next, consider the case $k=2$. Since the assertion is true for $k=1$, we can find $y_i,$ $i=1,2$, such that $m_iy_i=y_i$, $i=1,2$. As $K$ is an $F$-monoid, there exits a $z\in K$, such that $y_1z=y_2z$. Now, for $x=y_1z=y_2z$, we have
$$m_ix=m_iy_iz=y_iz=x, \quad i=1,2.$$
Let $k>2$. By the induction assumption, there exists a $y\in K$, for which $m_iy=y$, for all $i=1,\cdots, k-1$. Since the result is also true for $k=2$, we can apply it for $y$ and $m_k$, to conclude that there exists a $x\in K$, for which $m_kx=x$ and $yx=x$. We have $m_ix=m_iyx=yx=x$, $i<k$ and thus, for all $1\leq i\leq k$. This finishes the proof.
\end{proof}

\begin{Co}\label{142.7.10} A finite monoid is an $F$-monoid if and only if it has a right zero element.
\end{Co}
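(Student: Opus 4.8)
The plan is to prove the two implications separately, with essentially all the work already packaged into Lemma~\ref{141}.

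First I would dispatch the easy direction, which needs no finiteness hypothesis: if $K$ possesses a right zero $\varrho$, then for any $m,n\in K$ we may take $x=\varrho$ and observe $m\varrho=\varrho=n\varrho$, so the defining condition of an $F$-monoid holds. (This is exactly the observation made just before the statement.)

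For the converse, suppose $K$ is finite and an $F$-monoid. Since $K$ is finite we may list all of its elements as $m_1,\dots,m_k$ and apply Lemma~\ref{141} to this finite collection: there exists $x\in K$ with $m_ix=x$ for every $i$, i.e. $mx=x$ for all $m\in K$. By definition this says precisely that $x$ is a right zero of $K$. (That it is then unique and idempotent was already noted, and also follows immediately: if $\varrho,\varrho'$ are right zeros then $\varrho=\varrho'\varrho=\varrho'$, and $x=xx$ by applying $mx=x$ with $m=x$.) This completes the equivalence.

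I do not expect any real obstacle here, since the substance is in Lemma~\ref{141}, whose proof has already been carried out. The only point worth flagging is where finiteness is used: it is needed solely to feed the \emph{entire} underlying set of $K$ as a finite input into Lemma~\ref{141}, so that the element $x$ absorbs every element of $K$ on the left; for infinite $F$-monoids this step breaks down and right zeros need not exist, so finiteness is genuinely essential to the stated corollary.
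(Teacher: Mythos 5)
Your proof is correct and follows exactly the route the paper intends: the easy direction is the remark preceding the statement, and the converse applies Lemma~\ref{141} to the full (finite) list of elements of $K$ to produce a right zero. Nothing further is needed.
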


A submonoid $K$ of a monoid $M$ is called an \emph{$F$-submonoid} if $K$ is an $F$-monoid.

\subsection{Saturated submonoids}
 
A submonoid $K$ of a monoid $M$ is called \emph{saturated}, if for any $m\in M$, for which $mx=x$ for some $x\in K$, one has $m\in K$. It is clear, that to any submonoid $K$, there is a smallest saturated submonoid containing $K$. This is the intersection of all saturated submonoids containing $K$. (The fact that the intersection of saturated submonoids is again saturated is readily checked). We denote this associated saturated submonoid by $\widehat{K}$ and sometimes refer to it as the \emph{saturation} of $K$.

\begin{Exm} Let $M=\{1,0,a,b,ab\}$, where $a^2=a, b^2=b$ and $ba=0.$ The only saturated $F$-submonoids of $M$ are $\{1\}, \{1,a\}, \{1,b\}$ and $M$ itself. On the other hand, $\{1,0\}, \{1,0,a\}, \{1,0,b\}$ and $\{1,0,ab\}$ are non-saturated $F$-submonoids.
\end{Exm}

\subsection{Quotient by $F$-submonoids}

Let $H\subseteq A$ be a submonoid of a monoid $A$. We can define a relation $\sim$ on $A$ by $a\sim_H b$ if and only if there exist $x,y\in H$, such that $ax=by$. This relation, however, need not be an equivalence relation, as the transitivity property need not hold. If it does, however, than it is an $M$-congruence and the quotient, denoted by $M/K$, is a natural $M$-set.

\begin{Le}\label{filteredismonoid} Let $K$ be an $F$-submonoid of $M$ and $m,n\in M$. Then its induced relation $\sim_K$ is an $M$-congruence and $m\sim_K n$ if and only if there exists an $x\in K$, such that $mx=nx$.
\end{Le}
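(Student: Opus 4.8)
The statement has two parts: first, that $\sim_K$ is an $M$-congruence, and second, the explicit description $m \sim_K n \iff \exists x \in K$ with $mx = nx$. I would handle the explicit description first, since it immediately yields transitivity (the only non-obvious equivalence-relation axiom) and hence that $\sim_K$ is an equivalence relation, after which the $M$-congruence property is essentially free. So the plan is: (1) prove the equivalence $m \sim_K n \iff \exists x \in K,\ mx = nx$; (2) deduce transitivity, reflexivity, symmetry; (3) deduce left $M$-invariance.

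\textbf{Step 1: the explicit description.} The backward direction is trivial: if $mx = nx$ with $x \in K$, then taking $y = x' = x$ in the defining relation $ax = by$ gives $m \sim_K n$ directly (with both elements of $H$ equal to $x$). For the forward direction, suppose $m \sim_K n$, so there are $x, y \in K$ with $mx = ny$. Since $K$ is an $F$-submonoid, Lemma \ref{141} applies to the finite collection $\{x, y\} \subseteq K$: there exists $z \in K$ with $xz = z$ and $yz = z$. Then
$$mz = m(xz) = (mx)z = (ny)z = n(yz) = nz,$$
and $z \in K$ since $K$ is a submonoid. This proves the forward direction with the common element $z$.

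\textbf{Step 2 and 3: congruence.} Reflexivity ($m \sim_K m$, witnessed by $x = 1 \in K$) and symmetry are immediate from the symmetric form of the explicit criterion. Transitivity: if $m \sim_K n$ via $mx = nx$ ($x \in K$) and $n \sim_K p$ via $ny = py$ ($y \in K$), apply Lemma \ref{141} again to $\{x, y\} \subseteq K$ to get $z \in K$ with $xz = z$, $yz = z$; then $mz = mxz = nxz = nz$ and similarly $nz = pz$, so $mz = pz$ with $z \in K$, giving $m \sim_K p$. Finally, for the $M$-congruence property, suppose $m \sim_K n$, say $mx = nx$ with $x \in K$, and let $\ell \in M$ be arbitrary; then $(\ell m)x = \ell(mx) = \ell(nx) = (\ell n)x$, so $\ell m \sim_K \ell n$. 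Hence $\sim_K$ is a left $M$-congruence and $M/K$ is a well-defined left $M$-set.

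\textbf{Main obstacle.} There is no real difficulty here; the whole argument is routine once one has Lemma \ref{141} in hand. The one point worth care is recognizing that the $F$-submonoid hypothesis enters \emph{exactly} at the spot where one needs to merge two different "witness" elements of $K$ into a single common one — this is what rescues transitivity (which fails for general submonoids, as the paper itself flags just before the statement) and simultaneously gives the clean criterion $mx = nx$. I would make sure to invoke Lemma \ref{141} explicitly each time rather than re-deriving the $k=2$ case inline.
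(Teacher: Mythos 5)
Your proof is correct and follows essentially the same route as the paper: establish the single-witness criterion $m\sim_K n \iff \exists x\in K,\ mx=nx$ by merging the two witnesses via the $F$-property, then deduce transitivity (and the remaining, trivial, congruence axioms) from that criterion. The only cosmetic difference is that you invoke Lemma \ref{141} to get $xz=z=yz$, where the paper uses just the defining condition of an $F$-monoid (some $z\in K$ with $xz=yz$), which suffices; both are fine.
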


\begin{proof} First, we show that the two relations are the same. One side is clear, by letting $x=y$. For the other side, let $x,y\in K$ be elements, such that $mx=ny$. Since $K$ is an $F$-submonoid, we can find a $z\in K$, such that $xz=yz$. As $mx=ny\Rightarrow mxz=nyz$, the result follows.

To see that $\sim_K$ is an $M$-congruence, let $m\sim_K n$ and $n\sim_K k$. That is, we have $x,y\in K$, such that $mx=mx$ and $ny=ky$. As $K$ is an $F$-submonoid, there exists a $z\in K$, such that $xz=yz$. We now have
$$m(xz)=(mx)z=(nx)z=n(xz)=n(yz)=(ny)z=(ky)z=k(yz).$$
Hence, $m\sim_K k$ with the first definition and so, transitivity holds.
\end{proof}

In general, the $M$-congruence $\sim_K$ is not a congruence (that is, the quotient $M/K$ need not be a monoid), even if $K$ is an $F$-submonoid. In fact, take again $M=\{1,0,a,b,ab\}$, where $a^2=a, b^2=b$ and $ba=0.$ We take $K=\{1,b\}$. In this case, we have three equivalence classes $1\sim_K b$, $a\sim_K ab$ and $0$. Since $1\cdot a=a \not \sim_K ba=0$, we see that $\sim_K$ is not a congruence.

We have shown that $M/K$ is a left $M$-set. More is true, however.

\begin{Pro}\label{153} Let $K$ be an $F$-submonoid of $M$. The quotient $M/K$ is a filtered left $M$-set.
\end{Pro}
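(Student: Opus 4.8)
The plan is to verify the three filteredness conditions (F1), (F2), (F3) of Lemma \ref{Fcond} for the left $M$-set $M/K$.

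\medskip

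\noindent\textbf{Condition (F1).} This is immediate: $M/K$ is nonempty because $M\neq\emptyset$, the class of $1$ being an element.

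\medskip

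\noindent\textbf{Condition (F3).} Given two classes $[a_1], [a_2]\in M/K$, I would simply take $c=[1]$, $m_1=a_1$, $m_2=a_2$, so that $m_i c = [a_i]$. Thus (F3) holds trivially, reflecting the fact that $M/K$ is cyclic, generated by $[1]$ (consistent with Theorem \ref{132'30.9}(i)).

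\medskip

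\noindent\textbf{Condition (F2).} This is the main point. Suppose $m_1 [a] = m_2 [a]$ in $M/K$ for some $m_1, m_2\in M$ and $a\in M$; that is, $m_1 a \sim_K m_2 a$. By Lemma \ref{filteredismonoid}, there exists $x\in K$ with $m_1 a x = m_2 a x$. We must produce $m\in M$ and $\tilde a\in M$ with $m[\tilde a] = [a]$ and $m_1 m = m_2 m$. The natural guess is $\tilde a = ax$ (as an element of $M$) and $m = 1$: then $m_1 m = m_1 = m_2 = m_2 m$ fails in general, so that does not work directly. Instead, set $\tilde a = [ax]$ and try to find $m$ with $m\cdot ax$... — but $m$ must kill the discrepancy between $m_1$ and $m_2$ on the nose in $M$, which we cannot expect. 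The correct move is to exploit that the equation we need, $m_1 m = m_2 m$, only has to hold in $M$, while $m[\tilde a]=[a]$ is a statement in the quotient. Take $\tilde a = [a]$ itself and look for $m\in M$ with $m[a] = [a]$, i.e. $ma\sim_K a$, together with $m_1 m = m_2 m$. Here is where $K$ being an $F$-monoid enters: since $m_1 a x = m_2 a x$ with $x\in K$, consider the submonoid generated inside the relevant structure; applying Lemma \ref{141} to the (finite collection of) elements needed, or more directly using the $F$-monoid property of $K$ repeatedly, one finds $x'\in K$ absorbing the relation. Concretely, I expect to set $m = ax'$ for a suitable $x'\in K$: then $m[a]$... no — $m$ acts on the left. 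Let me instead take $m\in M$ to be an element such that right-multiplication by $m$ equalizes $m_1$ and $m_2$; the hypothesis gives $x\in K$ with $m_1(ax) = m_2(ax)$, so $m:=ax$ satisfies $m_1 m = m_2 m$. It remains to check $m[\tilde a] = [a]$ for some $\tilde a$, i.e. $[ax\tilde a] = [a]$, equivalently $ax\tilde a\sim_K a$. Choosing $\tilde a = 1$ this asks $ax\sim_K a$, which holds since $ax\cdot 1 = a\cdot x$ with $x\in K$ (using the definition $a\sim_K b$ iff $\exists\, u,v\in K: au = bv$, take $u=1$, $v=x$). Hence $m = ax$ and $\tilde a = 1$ work.

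\medskip

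\noindent The main obstacle is getting the bookkeeping in (F2) right: the relation $m_1 a = m_2 a$ holds only in the quotient, so one must first descend to an honest equation $m_1 a x = m_2 a x$ in $M$ via Lemma \ref{filteredismonoid}, and then recognize that the witness $m$ required by (F2) can be taken to be $ax$ while $\tilde a$ is just $1$, so that $m\tilde a = ax$ is $\sim_K$-equivalent to $a$. Once this identification is made, all three conditions follow, and Lemma \ref{Fcond} gives that $M/K$ is filtered. I would also remark that this construction recovers Example 2.3.2 when $K=\{1,e\}$, where $M/K \cong Me$.
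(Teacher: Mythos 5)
Your proof is correct and follows essentially the same route as the paper: (F1) and (F3) are handled identically, and for (F2) you arrive at the same witnesses $m=ax$, $\tilde a=q(1)$. The only cosmetic difference is in verifying $ax\sim_K a$: you use the two-witness definition $(ax)\cdot 1=a\cdot x$ directly, whereas the paper picks $y\in K$ with $xy=y$ and checks $axy=ay$; both are valid given Lemma \ref{filteredismonoid}.
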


\begin{proof} The fact that $M/K$ is a left $M$-set is just Lemma \ref{filteredismonoid}. To see that $M/K$ is in addition filtered, consider the canonical surjective map $q:M\to M/K$. Since $M$ is a monoid, it is not empty. Hence, $M/K$ is also non-empty and condition F1 holds.

To show F2, assume $m_1q(a)=m_2q(a)$, with $a,m_1,m_2\in M$. We have to find $m, \tilde{a}\in M$, such that $m_1m=m_2m$ and $mq(\tilde{a})=q(a)$. To this end, observe that
$$q(m_1a)=m_1q(a)=m_2q(a)=q(m_2a).$$
Hence, $m_1ax=m_2ax$ for an element $x\in K$. Since $K$ is an $F$-submonoid, there exists an element $y\in K$, such that $xy=y$. So, we can take $m=ax$ and $\tilde{a}=q(1)$. We have $m_1m=m_1ax=m_2ax=m_2m$ and $mq(1)=q(ax)=q(a)$, because $axy=ay$ and $ax\sim_K a$. 

To show F3, take $q(a_1),q(a_2)\in M/K$. Then $a_1q(1)=q(a_1)$ and $a_2q(1)=q(a_2)$. Thus, F3 holds with $a=q(1)$. 
\end{proof}

We have the following ``inverse statement'' to the above:

\begin{Pro}\label{154} Let $\rho$ be an $M$-congruence on $M$, such that $M/\sim_\rho$ is a filtered left $M$-set and define
$$K_\rho=\{m\in M|1\sim_\rho m\}.$$
\begin{itemize}
	\item[i)] The subset $K_\rho$ is a saturated $F$-submonoid of $M$.
	\item[ii)] We have $M/\sim_\rho \ \simeq M/K_\rho$. 
\end{itemize}

\end{Pro}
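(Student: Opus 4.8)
The plan is to analyze the relation $\sim_\rho$ in terms of the element $1 \in M$ and the filtered conditions F1–F3, mirroring the way Proposition \ref{153} was proved but running the implications backwards. Throughout I will write $q\colon M \to M/\sim_\rho$ for the quotient map and abbreviate $K = K_\rho$.

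For part (i), I first check that $K$ is a submonoid: this is exactly Lemma \ref{111.30.9} applied to the $M$-set $A = M$ and the element $a = 1$, since $K_\rho(1) = \{x \in M \mid 1 \sim_\rho x\cdot 1\} = K$. Next I show $K$ is an $F$-submonoid. Given $m, n \in K$, we have $q(m) = q(1) = q(n)$, hence $mq(1) = nq(1)$, i.e. $m \cdot q(1) = n \cdot q(1)$ in the $M$-set $M/\sim_\rho$. Now apply condition F2 with $a = 1$: there exist $x \in M$ and $\tilde a \in M/\sim_\rho$ with $x\tilde a = q(1)$ and $mx = nx$. This gives $mx = nx$, but I still need $x \in K$; here I will instead argue more carefully — lifting $\tilde a$ to some $\tilde a_0 \in M$ with $q(x\tilde a_0) = q(1)$, so $x\tilde a_0 \in K$, and then replacing $x$ by $x\tilde a_0$: from $mx = nx$ we get $m(x\tilde a_0) = n(x\tilde a_0)$, and $x\tilde a_0 \in K$, so $K$ is an $F$-submonoid. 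Finally, saturation: suppose $m \in M$ and $mx = x$ for some $x \in K$. Then $q(x) = q(1)$, so $q(mx) = q(m \cdot 1) = q(m)$, while $q(mx) = q(x) = q(1)$; hence $q(m) = q(1)$, i.e. $m \sim_\rho 1$, so $m \in K$.

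For part (ii), by Lemma \ref{filteredismonoid} the relation $\sim_K$ is an $M$-congruence with $m \sim_K n$ iff $mx = nx$ for some $x \in K$, so it suffices to show $\sim_K$ and $\sim_\rho$ coincide as relations on $M$, whence $M/\sim_\rho \simeq M/K$ as $M$-sets. If $m \sim_K n$, pick $x \in K$ with $mx = nx$; then $q(m)q$-image manipulations give $q(mx) = q(nx)$, and since $x \in K$ means $q(x) = q(1)$, we get $q(m) = q(m)q(x)\text{-type argument}$ — more precisely $q(mx) = m\cdot q(x) = m \cdot q(1) = q(m)$ and likewise $q(nx) = q(n)$, so $q(m) = q(n)$, i.e. $m \sim_\rho n$. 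Conversely, if $m \sim_\rho n$, then $q(m) = q(n)$, i.e. $m\cdot q(1) = n \cdot q(1)$ in the filtered $M$-set; apply F2 with $a = 1$ to get $x \in M$, $\tilde a$ with $x\tilde a = q(1)$, $mx = nx$, and as in part (i) replace $x$ by a lift of the form $x\tilde a_0 \in K$ to conclude $m \sim_K n$.

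The main obstacle I anticipate is the recurring bookkeeping step of converting an equation in the $M$-set $M/\sim_\rho$ (where F2 produces an element $\tilde a$ living in the quotient, not in $M$) into an honest element of the submonoid $K \subseteq M$. The trick each time is to lift $\tilde a$ to $\tilde a_0 \in M$, note $x\tilde a_0 \in K$ because $q(x\tilde a_0) = x\tilde a = q(1)$, and absorb $\tilde a_0$ into the multiplier; one must double-check that $mx = nx$ survives right-multiplication by $\tilde a_0$, which it does trivially. Everything else is a routine unwinding of definitions, using Lemma \ref{111.30.9} for the submonoid property, Lemma \ref{141}/Corollary \ref{142.7.10} nowhere essentially (though they reassure us the notion is non-vacuous), and Lemma \ref{filteredismonoid} to identify $M/K$.
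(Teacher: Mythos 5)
Your proof is correct and follows essentially the same route as the paper: Lemma \ref{111.30.9} for the submonoid property, condition F2 applied at $q(1)$ (with the lift $\tilde a_0$ absorbed into the multiplier, which is exactly the paper's element $z=m\tilde a$) for the $F$-submonoid and for one direction of $\sim_\rho\,=\,\sim_K$, and the direct computation $q(mx)=mq(1)=q(m)$ for the other direction and for saturation. No gaps; the ``lifting'' bookkeeping you flag is precisely how the paper handles it as well.
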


\begin{proof} i) By Lemma \ref{111.30.9}, $K_\rho$ is a submonoid of $M$. Take $x,y\in K_\rho$. We have $x\sim_\rho 1\sim_\rho y$, by the definition of $K_\rho$. Thus $q(x)=q(y)$, where $q:M\to M/\sim_\rho$ is the canonical map. It follows that $xq(1)=yq(1)$. Since $M/\sim_\rho$ is filtered, there are $m, \tilde{a}\in M$ such that $xm=ym$ and $mq(\tilde{a})=q(1)$. The last condition implies that $z:=m\tilde{a}\in K$. We have $xz=xm\tilde{a}=ym\tilde{a}=yz$. This shows that $K_\rho$ is an $F$-submonoid.

Take $m\in M$ and $x\in K_\rho$, such that $mx=x$. Since $1\sim_\rho x$, it follows that $m\sim_\rho mx=x\sim_\rho 1$. Thus, $m\in K_\rho$ and $K_\rho$ is saturated.

ii) Assume $m\sim_\rho n$. It follows that $mq(1)=nq(1)$. By F2, there are $x,a\in M$, such that $mx=nx$ and $xq(a)=q(1)$. The last condition implies $y=xa\in K$. Since $my=mxa=nxa=ny$, we see that $m\sim_K n$. Conversely, let $m\sim_K n$. That is $mx=nx$ for $x\in K$. We have $q(mx)=mq(x)=mq(1)=q(m)$ and $q(nx)=q(n)$. This yields $q(m)=q(n)$, which implies $m\sim_\rho n$. This finishes the proof.
\end{proof}

\begin{Co} Let $K$ be an $F$-submonoid of $M$ and $\rho$ be the congruence $\sim_K$, corresponding to $K$. Then  
\begin{itemize}
	\item[i)] $K_\rho=\{m\in M| mx=x, \ {\rm for \ an \ element} \ x\in K\}=\widehat{K}, $ where $\widehat{K}$ is the saturation of $K$,
	\item[ii)] $\widehat{K}$ is an $F$-submonoid,
	\item[iii)] if $K$ is saturated, we have $\widehat{K}=K$.
\end{itemize}	
\end{Co}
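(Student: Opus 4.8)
The plan is to deduce everything from Propositions \ref{153} and \ref{154} applied to the congruence $\rho=\sim_K$. First I would recall that, by Lemma \ref{filteredismonoid}, $\sim_K$ is an $M$-congruence on $M$ and that $m\sim_K n$ holds precisely when $mx=nx$ for some $x\in K$; in particular $1\sim_K m$ if and only if $mx=x$ for some $x\in K$, which already identifies $K_\rho=\{m\in M\mid 1\sim_\rho m\}$ with the set $\{m\in M\mid mx=x\ \text{for an element}\ x\in K\}$ appearing in (i). By Proposition \ref{153} the quotient $M/\!\sim_K$ is a filtered left $M$-set, so Proposition \ref{154}(i) applies and tells us that $K_\rho$ is a saturated $F$-submonoid of $M$. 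This will give (ii) as soon as we know $K_\rho=\widehat{K}$.

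Next I would establish the remaining equality $K_\rho=\widehat{K}$ of (i). For one inclusion: every $k\in K$ satisfies $kx=x$ for a suitable $x\in K$ by Lemma \ref{141} (applied to the single element $k$), hence $K\subseteq K_\rho$; since $K_\rho$ is saturated and $\widehat{K}$ is the smallest saturated submonoid containing $K$, we get $\widehat{K}\subseteq K_\rho$. Conversely, if $m\in K_\rho$ then $mx=x$ for some $x\in K\subseteq\widehat{K}$, and since $\widehat{K}$ is saturated this forces $m\in\widehat{K}$; thus $K_\rho\subseteq\widehat{K}$. Combining the two inclusions gives $K_\rho=\widehat{K}$, which together with the first paragraph completes (i), and then (ii) is immediate since $\widehat{K}=K_\rho$ was shown to be an $F$-submonoid.

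Finally, (iii) is immediate: if $K$ is already saturated, then $K$ is itself the smallest saturated submonoid containing $K$, so $\widehat{K}=K$ (equivalently, $K\subseteq K_\rho\subseteq\widehat{K}=K$). I do not expect any genuine obstacle here — the statement is essentially a bookkeeping corollary of Propositions \ref{153} and \ref{154} — the only point requiring a little care is the inclusion $K\subseteq K_\rho$, where one must invoke Lemma \ref{141} rather than the bare definition of an $F$-monoid.
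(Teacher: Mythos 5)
Your proof is correct and takes essentially the same route as the paper: identify $K_\rho$ via Lemma \ref{filteredismonoid}, invoke Propositions \ref{153} and \ref{154} to see that $K_\rho$ is a saturated $F$-submonoid, and establish the two inclusions $\widehat{K}\subseteq K_\rho$ and $K_\rho\subseteq\widehat{K}$ exactly as the paper does. The only cosmetic difference is your final caveat: the inclusion $K\subseteq K_\rho$ already follows from the bare definition of an $F$-monoid (take $n=1$, noting $1\in K$), which is all that Lemma \ref{141} with $k=1$ amounts to, and is what the paper uses.
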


\begin{proof} i) By definition, $m\in K_\rho$ if and only if $1\sim_\rho m$. This happens exactly if there exists an $x\in K$, such that $mx=x$. This proves the first equality. Next, take $m\in K$. Since $K$ is an $F$-monoid, there exists an $x\in K$, such that $mx=x$. Hence, $1\sim_\rho m$ and $m\in K_\rho$. This shows that $K\subseteq K_\rho$. Since $K_\rho$ is saturated by Proposition \ref{154}, we have $\widehat{K}\subseteq K_\rho$. To show the opposite inclusion, take any $m\in K_\rho$. Thus, $1\sim_\rho m$. So, $m=mx$ for some $x\in K$. Since $\widehat{K}$ is saturated and $x\in K\subseteq \widehat{K}$, we see that $m\in \widehat{K}$. It follows that $K_\rho=\widehat{K}$.

ii) Follows from part i) by virtue Proposition \ref{154} and iii) is holds by definition.
\end{proof}

\begin{Th}\label{141.0110} Let $M$ be a finite monoid. Any filtered left $M$-set is isomorphic to $M/K$, for a suitable saturated $F$-submonoid $K$ of $M$.
\end{Th}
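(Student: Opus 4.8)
The plan is to combine Theorem~\ref{132'30.9}(i), which says every filtered left $M$-set $A$ is cyclic, with the structure results of the previous subsection (Lemma~\ref{filteredismonoid}, Propositions~\ref{153} and~\ref{154}). First I would fix a filtered left $M$-set $A$ and, using Theorem~\ref{132'30.9}(i), choose a generator $c\in A$, so that the action map $M\to A$, $m\mapsto mc$, is a surjection of left $M$-sets. Let $\rho$ be the induced congruence on $M$, i.e. $m\sim_\rho n$ iff $mc=nc$; this is clearly an $M$-congruence on $M$ (viewed as a left $M$-set over itself), and the quotient $M/\!\sim_\rho$ is isomorphic, as a left $M$-set, to $A$. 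In particular $M/\!\sim_\rho$ is filtered.

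Next I would apply Proposition~\ref{154} to this $\rho$. Since $M/\!\sim_\rho$ is a filtered left $M$-set, part (i) of that proposition gives that $K_\rho=\{m\in M\mid 1\sim_\rho m\}=\{m\in M\mid mc=c\}$ is a \emph{saturated} $F$-submonoid of $M$, and part (ii) gives $M/\!\sim_\rho\ \simeq\ M/K_\rho$. Composing the two isomorphisms of left $M$-sets yields $A\simeq M/\!\sim_\rho\ \simeq\ M/K_\rho$ with $K:=K_\rho$ saturated and an $F$-submonoid, which is exactly the assertion. (One should note that finiteness of $M$ is used only to invoke Theorem~\ref{132'30.9}(i); everything after that is general.)

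There is essentially no hard step here: the real content was already extracted into Theorem~\ref{132'30.9} (cyclicity, hence the description of $A$ as a quotient of $M$) and into Proposition~\ref{154} (that the ``denominator'' submonoid of a filtered quotient is a saturated $F$-submonoid, and reconstructs the quotient). The only point requiring a line of care is the identification of the congruence $\rho$ coming from a \emph{chosen} generator $c$: different generators give different but isomorphic quotients, so the statement is only up to isomorphism of $M$-sets, which is all that is claimed. So the proof is short: pick a generator, read off the congruence, and feed it into Proposition~\ref{154}.

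\begin{proof}
Let $A$ be a filtered left $M$-set. By Theorem~\ref{132'30.9}(i), $A$ is cyclic; choose a generator $c\in A$, so the map $\pi:M\to A$, $\pi(m)=mc$, is a surjective morphism of left $M$-sets. Let $\rho$ be the $M$-congruence on $M$ defined by $m\sim_\rho n$ if and only if $mc=nc$. Then $\pi$ induces an isomorphism of left $M$-sets $M/\!\sim_\rho\ \xrightarrow{\ \simeq\ } A$, so $M/\!\sim_\rho$ is filtered. By Proposition~\ref{154}(i), the submonoid $K_\rho=\{m\in M\mid 1\sim_\rho m\}$ is a saturated $F$-submonoid of $M$, and by Proposition~\ref{154}(ii) we have $M/\!\sim_\rho\ \simeq\ M/K_\rho$. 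Setting $K=K_\rho$ and composing the isomorphisms gives $A\simeq M/K$ with $K$ a saturated $F$-submonoid of $M$, as claimed.
\end{proof}
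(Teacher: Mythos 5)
Your proof is correct and follows essentially the same route as the paper: cyclicity from Theorem~\ref{132'30.9} gives $A\simeq M/\!\sim_\rho$ for an $M$-congruence $\rho$, and Proposition~\ref{154} then supplies both that $K_\rho$ is a saturated $F$-submonoid and the isomorphism $M/\!\sim_\rho\,\simeq M/K_\rho$. Your write-up merely makes the choice of generator and the induced congruence explicit, which the paper leaves implicit.
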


Below (see Theorem \ref{7.102010}), we will prove a much stronger result.

\begin{proof}Any filtered $M$-set is cyclic by Theorem \ref{132'30.9}. Hence, it is isomorphic to $M/\rho$, where $\rho$ is an $M$-congruence. Part ii) of Proposition \ref{154} says that $K=K_\rho$ is a saturated $F$-submodule of $M$. It remains to show that $M/\sim_\rho\simeq M/K_\rho$, which we already did in Proposition \ref{154}.
\end{proof}

\begin{Co}\label{153710} Let $K$ and $L$ be $F$-submonoids of $M$. Then $M/K\simeq M/L$ if and only if $\widehat{K}=\hat{L}$. In particular, $M/K\simeq M/\widehat{K}$.
\end{Co}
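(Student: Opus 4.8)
The plan is to deduce everything from the preceding results, chiefly Theorem \ref{141.0110} together with the computation $K_\rho = \widehat{K}$ established in the Corollary just above. First I would observe that, for an $F$-submonoid $K$, the $M$-congruence $\sim_K$ has associated saturated submonoid $K_{\sim_K} = \widehat K$; this is exactly part (i) of that Corollary. So $M/K$ already equals $M/\widehat K$ up to the canonical identification: indeed Proposition \ref{154}(ii), applied to $\rho = {\sim_K}$, gives $M/{\sim_K} \simeq M/K_\rho = M/\widehat K$, which is the last sentence of the statement and also one direction of the main equivalence (if $\widehat K = \widehat L$ then $M/K \simeq M/\widehat K = M/\widehat L \simeq M/L$).

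For the converse, suppose $M/K \simeq M/L$ as left $M$-sets, with $K,L$ $F$-submonoids. The key is that the saturated submonoid can be recovered intrinsically from the $M$-set together with the choice of a generator. Concretely, $M/K$ is cyclic, generated by $q(1)$; and by Lemma \ref{filteredismonoid} (or directly by the definition of $\sim_K$ and part (i) of the Corollary above) we have
$$\widehat K = \{ m \in M \mid m\,q(1) = q(1) \text{ in } M/K \},$$
i.e. $\widehat K = K_{\sim_K}$ is the stabiliser submonoid of the canonical generator. An isomorphism $\varphi: M/K \to M/L$ of $M$-sets sends the generator $q_K(1)$ to \emph{some} generator $c$ of $M/L$; I must check that the stabiliser of any generator of $M/L$ equals the stabiliser of $q_L(1)$, namely $\widehat L$. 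Since $M/L$ is filtered hence cyclic, any generator $c$ satisfies $c = m\, q_L(1)$ and $q_L(1) = n c$ for suitable $m,n \in M$; a short argument using F2 (exactly as in Proposition \ref{154}) shows the stabiliser of $c$ is again a saturated $F$-submonoid and that it coincides with $\widehat L$. Then $\widehat K = \operatorname{Stab}(q_K(1)) = \operatorname{Stab}(\varphi(q_K(1))) = \operatorname{Stab}(c) = \widehat L$, and we are done.

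The main obstacle is the lemma hidden in the converse direction: an $M$-set isomorphism need not preserve the distinguished generator $q(1)$, so I must show the saturated $F$-submonoid attached to a cyclic filtered $M$-set is independent of the chosen generator. I expect this to follow cleanly from the filtering conditions F2 and F3 — the same manipulations already used in the proof of Proposition \ref{154}(i)–(ii) — since passing from one generator $a$ to another generator $a'$ is controlled by elements $m,n$ with $a' = ma$, $a = na'$, and these intertwine the two stabiliser submonoids up to saturation. Once that independence is in hand, the rest is bookkeeping with the identifications already proved. The finiteness hypothesis on $M$ is used only through Theorem \ref{141.0110}/Theorem \ref{132'30.9} to know the relevant $M$-sets are cyclic; alternatively one may restrict attention to cyclic filtered $M$-sets and drop finiteness, but I would state it as in the excerpt.
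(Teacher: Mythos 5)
Your first paragraph is correct and is exactly the intended (unwritten) derivation in the paper: Proposition \ref{154}(ii) together with $K_{\sim_K}=\widehat{K}$ gives $M/K\simeq M/\widehat{K}$ (in fact $\sim_K\,=\,\sim_{\widehat{K}}$ as $M$-congruences), and hence $\widehat{K}=\widehat{L}$ implies $M/K\simeq M/L$. The gap is exactly in the lemma you defer for the converse: the claim that the (saturated) stabiliser of an arbitrary generator of a cyclic filtered $M$-set coincides with that of the canonical generator is \emph{false}. Take $M=T_2$, the monoid of all maps $\{1,2\}\to\{1,2\}$, so $M=\{\mathrm{id},\tau,c_1,c_2\}$ with $\tau$ the transposition and $c_i$ the constant map with value $i$. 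Put $K=\{\mathrm{id},c_1\}$ and $L=\{\mathrm{id},c_2\}$. Both are saturated $F$-submonoids (since $mc_i=c_i$ forces $m(i)=i$), so $\widehat{K}=K\neq L=\widehat{L}$. On the other hand, by Lemma \ref{filteredismonoid}, $m\sim_K n$ iff $mc_1=nc_1$ iff $m(1)=n(1)$, so $M/K\cong Mc_1=\{c_1,c_2\}$, and likewise $M/L\cong Mc_2=\{c_1,c_2\}$: the same two-element filtered $M$-set with the evaluation action. Its two generators $c_1,c_2$ have stabilisers $\{\mathrm{id},c_1\}$ and $\{\mathrm{id},c_2\}$ --- both saturated $F$-submonoids, yet different. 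So no amount of F2/F3 manipulation can prove the generator-independence your argument needs; abstractly, by Theorem \ref{7.102010} and Corollary \ref{372.08}, the isomorphism class of $Me$ only remembers the $\mathfrak{I}$-class of the idempotent $e$, whereas $\widehat{\{1,e\}}=\{m\mid me=e\}$ is strictly finer information.

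Consequently your proof of the ``only if'' half does not go through, and indeed that half fails if $\simeq$ means an abstract isomorphism of left $M$-sets (the example above is a counterexample to that reading of the statement itself). What your stabiliser argument does prove, and what is the only form the paper later uses (the proof of Theorem \ref{7.102010} invokes only the implication $\widehat{K}=\widehat{L}\Rightarrow\ \sim_K\,=\,\sim_L$), is the canonical-generator version: if the isomorphism $M/K\to M/L$ commutes with the projections from $M$ --- equivalently, if $\sim_K\,=\,\sim_L$ --- then comparing the stabilisers of $q_K(1)$ and $q_L(1)$ gives $\widehat{K}=\widehat{L}$. So either the converse should be restricted to that situation, or its conclusion should be weakened to the statement that the right zeros of $\widehat{K}$ and $\widehat{L}$ lie in the same $\mathfrak{I}$-class (equivalently $M/K\cong Me$ and $M/L\cong Mf$ with $e\,\mathfrak{I}\,f$), which is what an abstract isomorphism actually detects.
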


\subsection{ Some examples}

i) Assume $M$ is an $F$-monoid and take $K=M$. In this case, $M/M$ is a singleton. So, the terminal object of $\Sets_M$ is filtered. Conversely, if the terminal object of $ \Sets_M$ is filtered, then $M$ is an $F$-monoid. In fact, the terminal object is $M/\sim_\rho$, where $x\sim_\rho y$ for all $x,y\in M$. In this case, $K=M$ and hence, $M$ is an $F$-monoid, thanks to the proof of Theorem \ref{141.0110}. In particular, the single element set is filtered if $M$ has a right zero or $M$ is a semilattice.

ii) Let $e\in M$ be an idempotent. Then $K=\{1,e\}$ is an $F$-submonoid. In this case, $\sim_K$ is the equivalence relation on $M$, defined by $a\sim_{K} b$ if and only if $ae=be$. The left $M$-set $M/K\simeq Me$ is filtered. 

\begin{Th}\label{7.102010} Let $M$ be a finite monoid. Any filtered $M$-set is of the form $Me$, for an idempotent $e\in M$. Thus, $e\mapsto Me$ yields an equivalence of categories
$$ \left(\hi(M)\right)^{op}\cong\Pts(M).$$
\end{Th}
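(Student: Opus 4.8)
The plan is to combine Theorem \ref{141.0110} with a reduction step showing that every saturated $F$-submonoid of a finite monoid gives an $M$-set of the form $Me$. First I would recall from the examples above that $M/\{1,e\}\simeq Me$ and that $\{1,e\}$ is always an $F$-submonoid; by Corollary \ref{153710} it suffices to identify, for each saturated $F$-submonoid $K$ of $M$, an idempotent $e$ with $\widehat{\{1,e\}}=K$, i.e. with $K$ equal to the saturation of $\{1,e\}$. By Corollary \ref{142.7.10}, a finite $F$-monoid has a right zero element; applying this to $K$ itself (which is an $F$-submonoid, hence a finite $F$-monoid in its own right), we obtain an element $e\in K$ with $xe=e$ for all $x\in K$, and such a right zero is automatically idempotent. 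This $e$ is the candidate.

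Next I would verify that $\widehat{\{1,e\}}=K$. The inclusion $\widehat{\{1,e\}}\subseteq K$ holds because $e\in K$, so $\{1,e\}\subseteq K$, and $K$ is saturated, so the saturation of $\{1,e\}$ is contained in $K$. For the reverse inclusion $K\subseteq\widehat{\{1,e\}}$: take any $x\in K$. Since $e$ is a right zero of $K$ we have $xe=e$, which by the description of saturation (the Corollary after Proposition \ref{154}, i.e. $m\in\widehat{\{1,e\}}$ iff $mx'=x'$ for some $x'\in\{1,e\}$) immediately gives $x\in\widehat{\{1,e\}}$ with the witness $x'=e$. Hence $K=\widehat{\{1,e\}}$, and by Corollary \ref{153710} we get $M/K\simeq M/\{1,e\}\simeq Me$. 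Combined with Theorem \ref{141.0110}, this proves that every filtered $M$-set is isomorphic to some $Me$.

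For the equivalence of categories, I would set up the functor $\hi(M)^{op}\to\Pts(M)$ on objects by $e\mapsto {\sf p}_{Me}$ and on morphisms using the duality lemma: by Corollary \ref{eMf}, $\hom_{\hi(M)}(e,f)=fMe=\Hom_M(Me,Mf)$, and a morphism $Me\to Mf$ of filtered $M$-sets induces a morphism of the associated points, contravariantly (a natural transformation of inverse image functors, or equivalently of direct image functors, in the appropriate direction). So there is a functor, essentially the restriction of $e\mapsto Me$ realized inside $\Pts(M)$ via Diaconescu's description of points as filtered $M$-sets. It is essentially surjective by the first part of the theorem. Fullness and faithfulness reduce to the statement that $\Hom_{\Pts(M)}({\sf p}_{Me},{\sf p}_{Mf})\cong \Hom_M(Me,Mf)$ — that is, morphisms of points between those associated to filtered $M$-sets $A,B$ correspond exactly to $M$-set maps $A\to B$ — which is part of Diaconescu's theorem identifying $\Pts(M)$ with the full subcategory of filtered objects in ${}_M\Sets$; together with Corollary \ref{eMf} this gives the hom-set bijection. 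Thus $e\mapsto Me$ is full, faithful, and essentially surjective, hence an equivalence.

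The main obstacle I anticipate is bookkeeping around variance and the precise form of Diaconescu's theorem: one must be careful that $\Pts(M)$ is equivalent to filtered \emph{left} $M$-sets with ordinary $M$-set morphisms (covariantly), so that the contravariance in the statement comes entirely from the duality $e\mapsto Me$ between $\hi(M)$ and the subcategory $\{Me\}\subseteq{}_M\Sets$ established earlier. Once that is pinned down, everything else is assembling results already proved: the hard algebraic content (finiteness and cyclicity of filtered $M$-sets, the correspondence with saturated $F$-submonoids, and $F$-monoid $\Rightarrow$ right zero) is entirely in the preceding lemmas, and the new input here is only the short observation that the right zero of a saturated $F$-submonoid $K$ is an idempotent $e$ with $\widehat{\{1,e\}}=K$.
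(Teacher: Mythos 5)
Your proposal is correct and follows essentially the same route as the paper: reduce via Theorem \ref{141.0110} to $M/K$ with $K$ a saturated $F$-submonoid, use Corollary \ref{142.7.10} to extract a right zero $e$ of $K$ (automatically idempotent), check $\widehat{\{1,e\}}=K$ by the two inclusions you give, and conclude $M/K\simeq M/\{1,e\}\simeq Me$ via Corollary \ref{153710}. Your extra care about variance in the categorical equivalence (Diaconescu plus the duality $\hom_{\hi(M)}(e,f)=fMe=\Hom_M(Me,Mf)$) matches what the paper leaves implicit.
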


\begin{proof} We have already proven (see Theorem \ref{141.0110}) that any filtered $M$-set is of the form $M/K$, where $K$ is a saturated $F$-submonoind of $M$. By Corollary \ref{142.7.10}, $K$ has a right zero $\varrho$. Take $L=\{1,\varrho\}$. We have $L\subseteq K$ and $K\subseteq \widehat{L}$. Since $K$ is saturated, we have $K=\widehat{L}$. By Corollary \ref{153710}, we have $\sim_K=\sim_L$. Thus, we can take $e=\varrho$. 
\end{proof}

\begin{Co}\label{372.08} Let $M$ be a finite monoid, then 
\begin{itemize}
	\item[i)] $|\Pts(M)|=|\Pts(M^{op})|.$
	\item[ii)] If ${\sf p}$ is a point corresponding to an idempotent $e$, one has an isomorphism of monoids 
	$$End({\sf p})\cong \left ( eMe\right )^{op}.$$
	\item[iii)] There is a bijection $$\F_M\cong \mathsf{Idem}_{\mathfrak{I}}(M).$$
\end{itemize}
\end{Co}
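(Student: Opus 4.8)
The plan is to derive all three items directly from Theorem \ref{7.102010}, which gives an equivalence of categories $(\hi(M))^{op} \cong \Pts(M)$, together with the structural facts about $\hi(M)$ established in Section 3.1.

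For item (i), I would combine Theorem \ref{7.102010} with the identity $\hi(M^{op}) = (\hi(M))^{op}$ noted in the construction of $\hi(M)$. Indeed, $\Pts(M) \cong (\hi(M))^{op}$ and $\Pts(M^{op}) \cong (\hi(M^{op}))^{op} = ((\hi(M))^{op})^{op} = \hi(M)$. Since a category and its opposite have the same objects and the same isomorphism classes — in fact $\hi(M)$ and $(\hi(M))^{op}$ are literally isomorphic as categories — the cardinalities $|\Pts(M)|$ and $|\Pts(M^{op})|$ agree. (If one wants the statement about $\F_M$ rather than about the whole category $\Pts$, the same argument applies to iso-classes.) One should be a little careful that $M$ finite implies $M^{op}$ finite, so Theorem \ref{7.102010} is legitimately available on both sides.

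For item (ii), under the equivalence of Theorem \ref{7.102010} the point ${\sf p}$ corresponding to the idempotent $e$ is $\mathsf{p}_{Me}$, so $\End({\sf p})$ in $\Pts(M)$ corresponds to $\End_{(\hi(M))^{op}}(e) = \left(\End_{\hi(M)}(e)\right)^{op}$. Now $\End_{\hi(M)}(e) = \hom_{\hi(M)}(e,e) = eMe$ by the very definition of $\hi(M)$, with composition being multiplication in $M$; hence $\End_{\hi(M)}(e) \cong eMe$ as monoids, and therefore $\End({\sf p}) \cong (eMe)^{op}$. The only genuinely checkable point here is that the composition law of $\hi(M)$ restricted to $\hom(e,e)$ really is the monoid multiplication of $eMe$ with identity $e$, which is immediate from the composition formula $(gnf)\circ(fme) = gnfme$.

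For item (iii), I would use the Lemma stating that two idempotents $e,f$ are isomorphic in $\hi(M)$ iff $e\,\mathfrak{I}\,f$, so that the set of iso-classes of $\hi(M)$ is in bijection with $\mathsf{Idem}_{\mathfrak{I}}(M)$. Passing to the opposite category does not change iso-classes, so $\F_M = \sF_{\Sets_M}$, the iso-classes of $\Pts(M) \cong (\hi(M))^{op}$, is in bijection with $\mathsf{Idem}_{\mathfrak{I}}(M)$ as well. I expect none of these steps to present a serious obstacle: the entire corollary is bookkeeping on top of Theorem \ref{7.102010}, and the main (very mild) subtlety is simply keeping track of the ``op'' on both the topos-point side and the $\hi(M)$ side so that the three identifications come out with the correct variance — in particular remembering that $\End$ picks up an ``op'' when passing to the opposite category, which is exactly what produces the $(eMe)^{op}$ in (ii).
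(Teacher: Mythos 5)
Your proposal is correct and is essentially the paper's own (implicit) argument: the corollary carries no separate proof precisely because it is the bookkeeping you describe, namely Theorem \ref{7.102010} combined with $\hom_{\hi(M)}(e,e)=eMe$, the identity $\hi(M^{op})=(\hi(M))^{op}$, and the lemma identifying iso-classes of $\hi(M)$ with $\mathsf{Idem}_{\mathfrak{I}}(M)$; in particular your tracking of variance, which produces $(eMe)^{op}$ in (ii), is exactly right. One aside should be deleted, though: the parenthetical claim that $\hi(M)$ and $(\hi(M))^{op}$ are \emph{literally isomorphic} as categories is false in general. For instance, take $M=\{1,a,b\}$ with $a,b$ right zeros ($xa=a$, $xb=b$ for all $x\in M$); then $\hom_{\hi(M)}(1,a)=aM=\{a,b\}$ has two elements while $\hom_{\hi(M)}(a,1)=Ma=\{a\}$ has one, and since $1$ is the only object whose endomorphism monoid has three elements (and its iso-class is a singleton), no isomorphism, or even equivalence, $\hi(M)\cong(\hi(M))^{op}$ can exist. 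Fortunately your argument never uses this claim: for (i) and (iii) it suffices that a category and its opposite have the same objects and the same iso-classes (equivalently, that $MeM$, and hence the relation $\mathfrak{I}$, is unchanged when passing from $M$ to $M^{op}$), which is the fact you state first.
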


\section{Lattice of localising subcategories in $\Sets_M$}

\subsection{General facts on localising subcategories}

Recall that a \emph{localising subcategory} $\ta$ of a Grothendieck topos $\ea$ is a full subcategory of $\ea$, such that the following conditions hold:

\begin{itemize}
	\item [ i)]  If $x$ belongs to $\ta$ and $y\in \ea$ is isomorphic to $x$, then $y$ belongs to $\ta$.
	\item [ ii)] The inclusion $\iota:\ta\to \ea$ has a left adjoint $\rho:\ea\to \ta$, called the \emph{localisation}.
	\item [ iii)] The localisation $\rho$ respects finite limits.
\end{itemize}

It is well-known that in this case, $\ta$ is also a Grothendieck topos. We denote by $\Loc(\ea)$ the poset of all localising subcategories of $\ea$.

For a localising subcategory $\ta$ of $\ea$ and $p=(p_*,p^*):\set \to\ea$ a topos point of $\ea$, we write $p\fork \ta$ if $p_*(S)\in \ta$ for every set $S\in\set$.

We now consider the case when $\ta=\Sets_M$ is the category of right $M$-sets, for a monoid $M$. It is well-known (see for example Lemma 2.4.1 \cite{reconstruction}) that there is an order reversing bijection between the set of localising subcategories of ${\Loc}(\Sets_M)$ and all Grothendieck topologies defined on the one object category associated to $M$. Under this bijection, the localisation subcategory corresponding to a topology $\fa$, is the category of sheaves on $\fa$.

Let us recall the notion of a Grothendieck topology on a monoid and that of a sheaf over said Grothendieck topology. For this, we introduce the following notation: For a right ideal $\fa$ and an element $m\in M$, we set
$$(\fa:m)=\{x\in M| mx\in \fa\}.$$

For a monoid $M$, a \emph{Grothendieck topology} (or simply \emph{topology}) on a monoid $M$, is a collection of right ideals $\fa$, such that

\begin{itemize} 
	\item[(T1)]  $M\in \fa$,
	\item[(T2)]If $\af\in \fa$ and $m\in M$, then $(\af:m)\in \fa$,
	\item[(T3)] If $\fb\in \fa$ and $\af$ is a right ideal of $M$, such that $(\af:b)\in \fa$ for some $b\in \fb$, then $\af\in \fa$.
\end{itemize}

From these conditions we get the following results (see for example  \cite{bo_m}, \cite{reconstruction}):

\begin{itemize}
	\item[(i)] If $\af\subseteq \fb$ are right ideals and $\af\in \fa$, then $\fb\in \fa$.
	\item[(ii)] If $\af, \fb\in \fa$, then $\af\cap \fb\in \fa$.
	\item[(iii)] If $\af, \fb\in \fa$, then $\af\fb\in\fa$.
\end{itemize}

A right $M$-set $A$ is called an $\fa$-{\it sheaf} if the restriction map
$$A\to  Hom_M(\fa,A),\ \quad a\mapsto f_a$$
is a bijection for every $\fa\in \fa$. Here, $a\in A$ and $f_a\in Hom_M(\fa,A)$, where $f_a(x)=xa$, $x\in \fa$. We let $\Sh(\fa)$ denote the full subcategory of $\fa$-sheaves.

\begin{Le}\label{411} Let $\fa$ be a topology on a monoid $M$ and $A$ a filtered left $M$-set. Then
$${\sf p }_A\fork \Sh(\fa)$$
if and only if for any $\af\in \fa$, the canonical map $\af\otimes_MA\to A$ is an isomorphism. Here, ${\sf p}_A$ denotes the point of $\Sets_M$ corresponding to $A$.
\end{Le}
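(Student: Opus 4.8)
The plan is to unwind both sides of the claimed equivalence in terms of the explicit formulas for ${\sf p}_{A*}$ and ${\sf p}_A^*$ recalled in Section~2.1. Recall that ${\sf p}_{A*}(S) = \Hom_\Sets(A,S)$ with the right $M$-action $(\alpha m)(a) = \alpha(ma)$, and that a right $M$-set $B$ lies in $\Sh(\fa)$ precisely when the restriction map $B \to \Hom_M(\af,B)$, $b \mapsto f_b$ (with $f_b(x) = xb$), is bijective for every $\af \in \fa$. So ${\sf p}_A \fork \Sh(\fa)$ means: for every set $S$ and every $\af \in \fa$, the map $\Hom_\Sets(A,S) \to \Hom_M(\af, \Hom_\Sets(A,S))$ is a bijection. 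The first step is to rewrite the target $\Hom_M(\af, {\sf p}_{A*}(S))$ using the tensor-hom adjunction: a right $M$-set map $\af \to \Hom_\Sets(A,S)$ corresponds naturally to a map of sets $\af \otimes_M A \to S$. Under this identification, the restriction map $b \mapsto f_b$ becomes precisely precomposition with the canonical map $c_\af\colon \af \otimes_M A \to A$ (the one induced by the inclusion $\af \incl M$ together with $M \otimes_M A \cong A$). Hence ${\sf p}_A \fork \Sh(\fa)$ is equivalent to: for every $\af \in \fa$ and every set $S$, the map $c_\af^*\colon \Hom_\Sets(A,S) \to \Hom_\Sets(\af \otimes_M A, S)$ is a bijection.

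The second step is the categorical observation that a map $g\colon X \to Y$ of sets induces a bijection $g^*\colon \Hom_\Sets(Y,S) \to \Hom_\Sets(X,S)$ for all $S$ if and only if $g$ is itself a bijection. One direction is the Yoneda lemma (take $S = Y$ and chase the identity, then $S = X$); the other is trivial. Applying this with $g = c_\af$ gives exactly the stated condition: ${\sf p}_A \fork \Sh(\fa)$ holds iff $c_\af\colon \af \otimes_M A \to A$ is an isomorphism for every $\af \in \fa$. I would also note explicitly that the case $\af = M$ is automatic since $M \otimes_M A \cong A$, which is consistent with (T1).

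I do not expect a serious obstacle here; the proof is essentially a double application of adjunction and Yoneda. The one point that needs a little care — and the place I would slow down to check — is the identification of the restriction map $B \to \Hom_M(\af,B)$ (for $B = {\sf p}_{A*}(S)$) with $c_\af^*$ under the adjunction isomorphism $\Hom_M(\af, \Hom_\Sets(A,S)) \cong \Hom_\Sets(\af \otimes_M A, S)$. Concretely: given $b = \alpha \in \Hom_\Sets(A,S)$, the map $f_\alpha\colon \af \to \Hom_\Sets(A,S)$ sends $x \mapsto x\alpha$, where $(x\alpha)(a) = \alpha(xa)$; transposing, this is the set map $\af \otimes_M A \to S$ sending $x \otimes a \mapsto \alpha(xa)$, which is visibly $\alpha \circ c_\af$. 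So $f_{(-)}$ is $c_\af^*$, as claimed, and the rest follows formally. This verification, together with the Yoneda step, completes the proof.
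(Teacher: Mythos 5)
Your proposal is correct and follows essentially the same route as the paper: identify ${\sf p}_{A*}(S)=\Hom_\Sets(A,S)$, translate the sheaf condition through the tensor--hom adjunction into the statement that precomposition with the canonical map $\af\otimes_M A\to A$ is bijective on $\Hom_\Sets(-,S)$ for all $S$, and conclude by Yoneda. The only difference is that you spell out the compatibility of the restriction map with the adjunction, which the paper leaves implicit.
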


\begin{proof} Since the direct image functor $\Sets\to \Sets_M$ corresponding to the point ${\sf p}_A$ is given by $S\mapsto Hom_\Sets(A,S)$, we see that ${\sf p }_A\fork \Sh(\fa)$ if and only if $Hom_\Sets(A,S)$ is a $\fa$-sheaf. This means that, for all $\af\in \fa $, the canonical map
$$Hom_\Sets(A,S)\to Hom_{\Sets_M} (\af, Hom_\Sets(A,S))$$
is an isomorphism. The map in question is the same as 
$$Hom_\Sets(A,S)\to Hom_{\Sets} (\af\t _M A,S).$$
Since $S$ is any set, Yoneda's lemma implies that, this happens if and only if $\af\otimes_MA\to A$ is an isomorphism.
\end{proof}

\subsection{Idempotent ideals}

\begin{Le}\label{421.08} Let $\fm $ be a two-sided ideal of a monoid $M$, such that $\fm =\fm ^2$. The set
$$\cF_\fm =\{\fa \ | \ \fm\subseteq \fa\}$$
of right ideals containing $\fm $ is a Grothendieck topology on $M$.
\end{Le}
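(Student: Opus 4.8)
The plan is to verify the three axioms (T1), (T2), (T3) directly, bearing in mind that by construction every member of $\cF_\fm$ is already a right ideal of $M$, so for (T1) and (T3) it only remains to check a containment of $\fm$. Axiom (T1) is immediate: $M$ is a right ideal and $\fm\subseteq M$, so $M\in\cF_\fm$.

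For (T2) I would fix $\af\in\cF_\fm$ and $m\in M$ and check $(\af:m)\in\cF_\fm$. First, $(\af:m)$ is a right ideal: if $mx\in\af$ and $s\in M$, then $m(xs)=(mx)s\in\af M\subseteq\af$, so $xs\in(\af:m)$. Second, $\fm\subseteq(\af:m)$: for $y\in\fm$ one has $my\in M\fm\subseteq\fm\subseteq\af$ since $\fm$ is a left ideal, so $y\in(\af:m)$. Hence $(\af:m)\in\cF_\fm$. This step uses only that $\fm$ is a left ideal.

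The content of the lemma, and the only place where $\fm=\fm^2$ is used, is (T3). Here I would take $\fb\in\cF_\fm$ together with a right ideal $\af$ such that $(\af:b)\in\cF_\fm$ for every $b\in\fb$, and deduce $\fm\subseteq\af$. The hypothesis $(\af:b)\in\cF_\fm$ means $\fm\subseteq(\af:b)$, i.e. $by\in\af$ for all $y\in\fm$; letting $b$ range over $\fb$ this says exactly that the set of products $\fb\fm=\{by\mid b\in\fb,\ y\in\fm\}$ lies in $\af$. Now $\fb\in\cF_\fm$ gives $\fm\subseteq\fb$, hence $\fm\fm\subseteq\fb\fm\subseteq\af$; and since $\fm=\fm^2=\fm\fm$, this yields $\fm\subseteq\af$, i.e. $\af\in\cF_\fm$.

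I expect the only real obstacle to be the final move in (T3): the hypotheses produce, essentially formally, the chain $\fm^2\subseteq\fb\fm\subseteq\af$, and idempotency of $\fm$ is precisely what is needed to collapse $\fm^2$ to $\fm$ at the left end --- this is where the hypothesis of the lemma is indispensable. Everything else is routine bookkeeping with right ideals; as a consistency check one may also note directly that $\cF_\fm$ is upward closed and stable under finite intersections ($\fm\subseteq\af$ and $\fm\subseteq\fb$ imply $\fm\subseteq\af\cap\fb$), in accordance with consequences (i) and (ii) of the axioms.
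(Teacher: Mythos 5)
Your proposal is correct and follows essentially the same route as the paper: (T1) and (T2) are the same routine checks (you only add the verification that $(\af:m)$ is a right ideal), and your (T3) argument, phrased as $\fm=\fm^2\subseteq\fb\fm\subseteq\af$, is just the set-product formulation of the paper's element-wise step of writing $x=yz$ with $y,z\in\fm$ and using $z\in(\af:y)$. Nothing further is needed.
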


\begin{proof} The condition (T1) is obvious. For (T2), assume $ \fm  \subseteq \fa$ and $m \in M$. For any $x\in \fm $, $mx\in M\fm =\fm \subseteq \fa.$ Hence $x\in ( \fa:m )$. It follows that $\fm \subseteq (\fa:m)$ and $( \fa:m )\in \cF_\fm $. So (T2) holds. For (T3), take
$\fb\in \cF_\fm $. Assume $\af$ is a right ideal, such that $(\af:b)\in \cF_\fm $, for any $b\in \fb$. By assumption, $\fm \subseteq \fb$ and $\fm \subseteq (\af:b)$ for all $b\in \fb$. Take any $x\in \fm $. Since $\fm =\fm ^2$, we can write $x=yz$, where $y,z\in \fm $. Since $z\in \fm \subseteq (\af:y)$, we see that 
$x=yz\in \af$. Thus, $\fm \subseteq \af$ and $\af\in \fa_\fm $, from which (T3) follows.
\end{proof}

\begin{Pro}\label{422.08}
Let $\cF$ be a Grothendieck topology on a finite monoid $M$. There exists a two-sided ideal $\fm $, such that $\fm ^2=\fm $ and $\cF=\cF_\fm $.
\end{Pro}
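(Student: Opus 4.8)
The plan is to take for $\fm$ the smallest right ideal lying in $\cF$, and then to observe that finiteness of $M$ forces this ideal to be automatically two-sided and idempotent, after which the identity $\cF=\cF_\fm$ is essentially a tautology.

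First I would note that a finite monoid has only finitely many right ideals, so $\cF$ is a finite collection, and it is nonempty since $M\in\cF$ by (T1). Hence
$$\fm:=\bigcap_{\af\in\cF}\af$$
is a right ideal, and by finitely many applications of the consequence ``$\af\cap\fb\in\cF$ whenever $\af,\fb\in\cF$'' it again belongs to $\cF$. Thus $\fm$ is the $\subseteq$-least member of $\cF$; this is the only place where finiteness of $M$ is used.

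The two substantive points are then that $\fm$ is two-sided and idempotent. For two-sidedness, given $m\in M$ and any $\af\in\cF$, axiom (T2) gives $(\af:m)\in\cF$, so $\fm\subseteq(\af:m)$ by minimality, which means $mx\in\af$ for every $x\in\fm$; as $\af$ was arbitrary this forces $mx\in\bigcap_{\af\in\cF}\af=\fm$, so $M\fm\subseteq\fm$, and together with $\fm M\subseteq\fm$ we conclude that $\fm$ is a two-sided ideal. For idempotency, I would invoke the consequence ``$\af\fb\in\cF$ whenever $\af,\fb\in\cF$'' with $\af=\fb=\fm$, obtaining $\fm^2\in\cF$; minimality then gives $\fm\subseteq\fm^2$, while $\fm^2\subseteq\fm$ is automatic because $\fm$ is an ideal, so $\fm^2=\fm$.

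Finally I would verify $\cF=\cF_\fm$: every $\af\in\cF$ contains $\fm$ by construction, so $\cF\subseteq\cF_\fm$; conversely, any right ideal $\af$ with $\fm\subseteq\af$ lies in $\cF$ by the consequence ``a right ideal containing a member of $\cF$ is itself a member of $\cF$''. In view of Lemma \ref{421.08}, which then indeed applies to this $\fm$, we obtain the claim. I do not anticipate a genuine obstacle here; the only point at which one might expect difficulty is the passage from ``$\fm$ is merely the minimal right ideal of $\cF$'' to ``$\fm$ is two-sided and idempotent'', and both of these fall out directly from (T2) and from the multiplicativity of $\cF$, respectively.
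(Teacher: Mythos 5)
Your proof is correct and follows essentially the same route as the paper: take $\fm$ to be the least element of the (finite) topology $\cF$, use (T2) and minimality to see it is two-sided, and use closure of $\cF$ under products together with minimality to get $\fm^2=\fm$. The only difference is that you spell out the verification of $\cF=\cF_\fm$ (via minimality and the fact that a right ideal containing a member of $\cF$ lies in $\cF$), which the paper leaves implicit.
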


\begin{proof} As $M$ is finite, $\cF $ is finite as well. We can also see that $\cF$ contains a smallest element $\fm$, since $\cF$ is closed with respect to finite intersection. Take $x\in M$. Since $(\fm :x)\in \cF$, it follows that $\fm \subseteq (\fm :x)$. Equivalently, $x\fm \subseteq \fm $, which implies that $\fm $ is a two-sided ideal of $M$. We also know that $\cF$ is closed with respect to the product. Hence $\fm ^2\in \cF$. By minimality of $\fm $, we have $\fm \subseteq \fm ^2$ and hence, $\fm ^2=\fm $.
\end{proof}

We will use the following, well-known, fact, see \cite[Proposition 1.23]{st}.

\begin{Le}\label{423.28} Any two-sided idempotent ideal of a finite monoide $M$ has the form
$$\bigcup_{i\in I}Me_iM,$$
for a (finite) family of idempotents $({e_i})_{i\in I}$.
\end{Le}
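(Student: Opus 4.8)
The plan is to prove Lemma \ref{423.28} by a direct argument, showing that a two-sided idempotent ideal $\fm$ of a finite monoid $M$ equals the union of the principal two-sided ideals generated by the idempotents it contains. First I would observe that since $M$ is finite, for every element $a\in M$ the cyclic submonoid generated by $a$ contains an idempotent power $a^n$; in fact one has $Ma^n M\subseteq MaM$, but the reverse need not hold for a single $a$, so this observation alone does not suffice and one must use the idempotency hypothesis $\fm=\fm^2$.

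The key step will be to show: for every $x\in\fm$ there is an idempotent $e\in\fm$ with $x\in MeM$. Here I would exploit $\fm=\fm^2$ repeatedly. Writing $x=y z$ with $y,z\in\fm$, and iterating, one can arrange (using finiteness, so that the multiplication in $M$ is eventually "stable") to factor $x$ through an element all of whose powers behave well; more precisely, I would use the standard fact that in a finite monoid the ideal $\fm$, being a union of $\mathfrak{I}$-classes and idempotent, must have its minimal $\mathfrak{I}$-class (among those meeting $\fm$) containing an idempotent, and then peel off $\mathfrak{I}$-classes. Concretely: pick any $x\in\fm$; since $\fm=\fm^2=\fm^k$ for all $k$, write $x=x_1x_2\cdots x_k$ with all $x_i\in\fm$ and $k>|M|$, so two partial products coincide, yielding a factorization $x = u\, s\, v$ with $u,v\in M$ and $s\in\fm$ satisfying $s=s\,w\,s$ for some $w$ (an element lying in a subgroup of $M$). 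Then $sw$ is... actually the cleanest route is: $s\mathrel{\mathfrak{I}}e$ for the idempotent $e$ of the $\mathcal{H}$-class of $sw$ inside the $\mathcal{D}$-class of $s$, and since $\mathfrak{m}$ is a two-sided ideal it is a union of $\mathfrak{J}$-classes, hence $e\in\fm$ and $x\in MsM\subseteq MeM$.

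Granting the key step, the conclusion follows quickly: let $\{e_i\}_{i\in I}$ enumerate all idempotents lying in $\fm$ (a finite set, since $M$ is finite). Each $Me_iM\subseteq\fm$ because $\fm$ is a two-sided ideal containing $e_i$, so $\bigcup_{i\in I}Me_iM\subseteq\fm$. Conversely, by the key step every $x\in\fm$ lies in some $Me_iM$, giving $\fm\subseteq\bigcup_{i\in I}Me_iM$. Hence equality holds, and the family is finite.

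The main obstacle is the key step — producing, for a given $x\in\fm$, an idempotent $e\in\fm$ with $x\in MeM$. The subtlety is that $\fm=\fm^2$ is needed to "spread out" $x$ into a long product so that the pigeonhole principle on $M$ forces a repetition of partial products, which in turn manufactures an element of $\fm$ that stabilizes a suitable power and thereby an idempotent in the same $\mathfrak{J}$-class. I expect this to reduce to the cited result \cite[Proposition 1.23]{st}, so in the write-up I would simply invoke that reference rather than reprove finite-semigroup structure theory, but the sketch above indicates the mechanism in case a self-contained argument is wanted.
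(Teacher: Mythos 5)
The paper offers no proof of this lemma at all: it is quoted as a well-known fact with a citation to \cite[Proposition 1.23]{st}. So your fallback position --- simply invoking that reference --- is exactly what the paper does, and your two easy observations (each $Me_iM\subseteq\mathfrak{m}$ since $\mathfrak{m}$ is a two-sided ideal containing $e_i$; the family of idempotents in $\mathfrak{m}$ is finite) are fine. The added value of your proposal is the self-contained sketch of the key step, and its mechanism (use $\mathfrak{m}=\mathfrak{m}^2=\mathfrak{m}^k$, factor $x$ into a long product, pigeonhole on partial products, then power up to an idempotent) is indeed the right one.

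One detail in the middle of your sketch is off, though it is easily repaired within your own approach. If $p_j=x_1\cdots x_j$ and $p_i=p_{i'}$ for $i<i'$, what you get is $s:=p_i$ and $t:=x_{i+1}\cdots x_{i'}\in\mathfrak{m}$ with $s=st$; this does \emph{not} give a factorization $s=sws$ (regularity of $s$ does not follow), so the subsequent detour through the $\mathcal{H}$-class of $sw$ is not justified as written. But you do not need it: from $s=st$ you get $s=st^n$ for all $n$, and since $M$ is finite some power $e=t^n$ is idempotent; $e\in\mathfrak{m}$ because $t\in\mathfrak{m}$ and $\mathfrak{m}$ is an ideal, and then $x=p_{i}\cdot x_{i'+1}\cdots x_k=se\cdot x_{i'+1}\cdots x_k\in MeM$. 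With that replacement your direct argument is complete and avoids any Green-relations machinery, giving an elementary alternative to the citation the paper relies on.
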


\begin{Le} Let $e$ be an idempotent of $M$ and $\fm \subseteq M$ a two-sided ideal, such that $\fm =\fm ^2$. Then
$${\sf p}_{Me}\fork \Sh(\cF_{\fm })$$
if and only if $e\in \fm $.
\end{Le}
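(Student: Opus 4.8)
The plan is to use Lemma~\ref{411}, which reduces the statement $\mathsf{p}_{Me}\fork\Sh(\cF_\fm)$ to the assertion that for every right ideal $\af$ with $\fm\subseteq\af$, the canonical map $\af\otimes_M Me\to Me$ is an isomorphism. Since $Me$ is a left $M$-set, the key computational input is an identification of $\af\otimes_M Me$. I would first observe (using $e^2=e$) that $\af\otimes_M Me$ is naturally identified with the subset $\af e\subseteq Me$: every elementary tensor $a\otimes me$ equals $ame\otimes e$, so the map $\af\otimes_M Me\to\af e$, $a\otimes me\mapsto ame$, is a well-defined surjection, and one checks it is injective by noting that a general element of $\af\otimes_M Me$ has the form $a\otimes e$ with $a\in\af$ and that $a\otimes e = a'\otimes e$ forces $ae=a'e$ after tensoring down. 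Under this identification, the canonical map $\af\otimes_M Me\to Me$ becomes simply the inclusion $\af e\hookrightarrow Me$. Hence $\mathsf{p}_{Me}\fork\Sh(\cF_\fm)$ holds if and only if $\af e = Me$ for every right ideal $\af\supseteq\fm$.

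With this reformulation, the ``if'' direction is immediate: if $e\in\fm$, then for any $\af\in\cF_\fm$ we have $e\in\fm\subseteq\af$, hence $e=ee\in\af e$, and since $\af e$ is closed under left multiplication by $M$ (as $\af$ is a right ideal, wait — more carefully: $m(ae)=(ma)e$ and $ma$ need not lie in $\af$; instead use that $Me\subseteq \af e$ because $me = m(ee)\in \af e$ once $e\in\af$). Thus $Me\subseteq\af e\subseteq Me$, giving equality, so the condition $p_{Me}\fork\Sh(\cF_\fm)$ holds.

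For the ``only if'' direction I would argue contrapositively: suppose $e\notin\fm$. The candidate counterexample is the right ideal $\af=\fm\cup eM$... but this need not contain $\fm$ properly in a useful way; better is to take $\af$ to be the right ideal generated by $\fm$ together with all elements of $M$ that do \emph{not} lie in $Me$ — more precisely, I want a right ideal $\af\supseteq\fm$ with $e\notin\af$. Since $\fm$ is a two-sided ideal not containing $e$, consider $\af=\fm\cup\{x\in M : x\notin Me\}$; one checks this is a right ideal containing $\fm$. The cleanest choice, though, is: let $\af$ be the union of $\fm$ with $Me\setminus\{e\}$ adjusted to be a right ideal, or simply invoke that the set of right ideals containing $\fm$ but not $e$ has a maximal element $\af$ (it is nonempty, as I must verify $\fm$ itself works, i.e. $e\notin\fm$ by hypothesis). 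For such $\af$ we have $\af\in\cF_\fm$, yet $e=ee$; if $e\in\af e$ then $e=ae$ for some $a\in\af$, and since $\af$ is a right ideal, $a=ae'$-type manipulations show $e\in\af$ — contradiction. Hence $\af e\neq Me$ (as $e\in Me\setminus\af e$), so $\mathsf{p}_{Me}\not\fork\Sh(\cF_\fm)$.

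The main obstacle I anticipate is making the identification $\af\otimes_M Me\cong\af e$ fully rigorous — the tensor product over a noncommutative monoid requires care about which elementary tensors are identified — and, in the ``only if'' direction, exhibiting a concrete right ideal $\af\in\cF_\fm$ with $\af e\subsetneq Me$ when $e\notin\fm$; the natural guess $\af=\fm$ itself may already work, since $e\in\fm e$ would force $e=fe$ for some $f\in\fm$ and hence (as $\fm$ is a right ideal containing $f$) one might directly derive $e\in\fm$, contradicting the hypothesis. I would check that short argument first: if $e\notin\fm$, is it automatic that $\fm e\neq Me$? We have $Me\subseteq\fm e$ would require $e\in\fm e$, i.e. $e = xe$ with $x\in\fm$; then $e = xe\in\fm$ since $\fm$ absorbs right multiplication — contradiction. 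So in fact $\af=\fm$ already does the job, and no maximality argument is needed. This makes the proof quite short once the tensor identification is in place.
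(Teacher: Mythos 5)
Your overall route is the same as the paper's: reduce via Lemma \ref{411} to the condition that $\af\otimes_M Me\to Me$ is an isomorphism for all $\af\supseteq\fm$, observe that every tensor can be written as $a\otimes e$ and that $ae=be$ forces $a\otimes e=a\otimes ee=ae\otimes e=be\otimes e=b\otimes e$ (so the map is always injective with image $\af e$), and for the ``only if'' direction take $\af=\fm$ itself, where failure of surjectivity is exactly $e\notin\fm$. Two small remarks on your write-up: the injectivity you need is the implication $ae=a'e\Rightarrow a\otimes e=a'\otimes e$ (you stated the converse), and this is precisely the idempotent computation above, so it is fine once written in the right direction.

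There is, however, one step whose stated justification is wrong: in the ``if'' direction you claim $Me\subseteq\af e$ ``because $me=m(ee)\in\af e$ once $e\in\af$.'' Writing $m(ee)=(me)e$, to conclude membership in $\af e$ you need $me\in\af$, and this does \emph{not} follow from $e\in\af$ alone, since $\af$ is only a right ideal: for instance in the full transformation monoid on two letters, with $e$ a constant idempotent and $\af=eM$, one has $e\in\af$ but $Me\not\subseteq\af e$. The correct reason — and the one the paper uses — is that $e$ lies in the \emph{two-sided} ideal $\fm$, so $me\in M\fm\subseteq\fm\subseteq\af$ for every $m\in M$, whence $me=(me)e\in\af e$. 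With that one-line repair (and dropping the detour through maximal right ideals in the ``only if'' direction, which your own final observation shows is unnecessary), your argument is complete and coincides with the paper's proof.
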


\begin{proof} Assume ${\sf p}_{Me}\fork \Sh(\cF_{\fm })$. The canonical map
$$\mu_{\fm }:\fm \otimes_MMe\to Me$$
is an isomorphism by Lemma \ref{411}. The surjectivity of this map implies that $e=xme$, for some $x\in \fm $, $m\in M$. So, $e\in \fm  M=\fm $. Conversely, assume $e\in \fm $. We have to show that for any right ideal $\fa$, containing $\fm $, the canonical map $\mu_{\fa}:\fa\otimes_MMe\to Me$ is an isomorphism. For any $m\in M$, we have $me\in \fm \subseteq \fa$. Since $\mu(me\otimes e)=me$, we see taht $\mu$ is surjective. To show that $\mu$ is injective, we first observe that any element of $\fa\otimes_MMe$, can be write as $a\otimes e$, with $a\in \fa$. Assume $\mu(a\otimes e)=\mu(b\otimes e)$, $a,b\in \fa$. Thus $ae=be$. We have
$$a\otimes e=a\otimes ee=ae\otimes e=be\otimes e=b\otimes ee=b\otimes e$$
proving the injectivity of $\mu$.
\end{proof}

\begin{Exm}\label{425.08} i) Let $M$ once again be the monoid $\{1,0,a,b, ab\}$, $a^2=1=b^2$ and $ba=0$. It has 4 idempotents $1,0,a,b$. 
Since $M1=M$, $M0=\{0\}$, $Ma=\{0,a\}$ and $Mb=\{0,b,ab\}$, we see that they all define non-isomorphic idempotents. Thus, $M$ has $4$ non-isomorhic points corresponding to the filtered left $M$-sets $\{0\}, \{0,a\},\{0,b,ab\}$ and $M$. Since
$M1M=M$, $M0M=\{0\}$, $MaM=\{0,a,ab\}$, $MbM=\{0,b,ab\}$, we see that there are $6$ idempotent ideals
$$\emptyset, \{0\}, \{0,a,ab\},\{0,b,ab\}, \{0,a,b,ab\}, M.$$

ii) Let $M=T_3$ be the monoid of all endomorphic maps $\{1,2,3\}\to \{1,2,3\}$. It has $3^3=27$ elements. If $f$ is a such map,  the cardinality of the image of $f$ is known as the rank of $F$ and is denoted by $rk(f)$.
There are 3 maps of rank $1$ (i.e. constant maps)  and all of them are idempotents. There are $6$ idempotent elements of rank $2$ and only one idempotent of rank $3$. All together, we have $10$ idempotents. However, $\mathsf{Idem}_{\mathfrak{I}}(M)$ has only three elements, as two idempotents are $\mathfrak{I}$-equivalent if and only if they have the same rank. All two-sided ideals of $T_3$ are idempotent and they are 
$$\emptyset \subseteq I_1\subseteq I_2\subseteq I_3=M,$$
where $I_k=\{f\in T_3| rk(f)\leq k\}$, $k=1,2,3$. 
\end{Exm}

\subsection{Distributivity of the lattice ${\Loc}(\Sets_M)$}

Denote by $\ii(M)$ the set of  two-sided idempotent ideals of a monoid  $M$. Since the union of two-sided idempotent ideals of $M$ is again a two-sided idempotent ideal, we see that $\ii(M)$ is a join-semilattice, with $I\vee J=I\cup J$. Its greatest element is $M$ and least element is $\emptyset$. 
\\

In general, the intersection of two-sided idempotent ideals is not an idempotent ideal (see  i) of Example \ref{425.08}). This leads us to introduce the following notion:  We will say that monoid is \emph{III-closed} (idempotent ideal intersection), if for any two-sided idempotent ideals $I$ and $J$, the two-sided ideal $I\cap J$ is also an idempotent ideal.

\begin{Le}
\begin{itemize}
	\item[i)] Any finite regular monoid is III-closed.
	\item [ii)] Any finite commutative  monoid is III-closed.
\end{itemize}
\end{Le}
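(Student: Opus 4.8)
The plan is to prove each part by exhibiting, for every pair of two-sided idempotent ideals $I,J$, a family of idempotents whose union of principal ideals $Me_iM$ equals $I\cap J$, and then invoke Lemma \ref{423.28} in reverse: a finite union of $Me_iM$ is automatically an idempotent ideal (the union of idempotent ideals is idempotent), so it suffices to show $I\cap J$ is such a union.

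\textbf{Part (ii): the commutative case.} When $M$ is commutative and finite, every prime-type obstruction disappears, and the key point is that in a finite commutative monoid every ideal is a union of principal ideals $Mm$, and $Mm$ is idempotent iff some power of $m$ lies in $Mm^2$, which in the finite commutative setting is governed by idempotents: indeed each element $m$ has a power $m^n$ that is idempotent, and $Me$ for $e$ idempotent satisfies $MeM = Me = Me\cdot Me$. More directly: write $I=\bigcup_i Me_i$ and $J=\bigcup_j Mf_j$ with $e_i,f_j$ idempotent (possible by Lemma \ref{423.28}, since $Me_iM=Me_i$ here). Then $I\cap J = \bigcup_{i,j}(Me_i\cap Mf_j)$, and in a commutative monoid $Me_i\cap Mf_j = Me_if_j$ with $e_if_j$ again idempotent (as $e_i,f_j$ commute and are idempotent). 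Hence $I\cap J=\bigcup_{i,j}Me_if_j$ is a union of principal idempotent ideals, so idempotent. The main thing to check carefully is the identity $Me\cap Mf=Mef$ for commuting idempotents, which holds because $x\in Me\cap Mf$ gives $x=xe=xf$, whence $x=xef\in Mef$, and conversely $Mef\subseteq Me$ and $Mef\subseteq Mf$.

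\textbf{Part (i): the regular case.} Here I would use the structure of Green's relations in a finite regular monoid. Write $I=\bigcup_{i}Me_iM$, $J=\bigcup_j Mf_jM$ via Lemma \ref{423.28}. The claim reduces to showing $Me_iM\cap Mf_jM$ is an idempotent ideal for each pair, since $I\cap J=\bigcup_{i,j}(Me_iM\cap Mf_jM)$. In a finite monoid, $Me_iM$ is exactly $\{m\in M : m\leq_{\mathfrak I} e_i\}$, i.e.\ the set of elements whose $\mathfrak J$-class (equivalently, rank-type invariant) is $\le$ that of $e_i$; so $Me_iM\cap Mf_jM$ is the ideal of elements dominated by both $e_i$ and $f_j$ in the $\mathfrak J$-order. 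Regularity guarantees that every element lies in the same $\mathfrak J$-class as an idempotent, and that the poset of $\mathfrak J$-classes together with the property ``$\mathfrak J$-class of an idempotent'' behaves well: the ideal $Me_iM\cap Mf_jM$ is generated (as a two-sided ideal) by the elements $x$ with $x\le_{\mathfrak J} e_i$ and $x\le_{\mathfrak J} f_j$, and by regularity each maximal such $x$ can be taken idempotent, giving $Me_iM\cap Mf_jM=\bigcup_k Mg_kM$ for idempotents $g_k$. Thus it is an idempotent ideal.

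The main obstacle is the regular case: one must argue that the intersection ideal is generated by \emph{idempotents} and not merely by arbitrary elements — i.e.\ that the maximal $\mathfrak J$-classes contained in $Me_iM\cap Mf_jM$ contain idempotents. This is where finiteness plus regularity is essential: a finite regular monoid has every $\mathfrak J$-class containing an idempotent (every element is von Neumann regular, so some $axa=a$, and standard semigroup theory then yields an idempotent in the $\mathfrak J$-class of $a$), so any ideal is a union of $Me M$ over idempotents $e$ it contains, and in particular $Me_iM\cap Mf_jM$ is. I would cite the relevant structure facts from \cite{st} rather than reprove them. The commutative case is then essentially a special instance but is cleaner to present separately via the explicit formula $Me\cap Mf=Mef$.
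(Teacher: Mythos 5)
Your proposal is correct, and part (ii) is in substance the paper's own argument: decompose $I$ and $J$ into ideals generated by idempotents via Lemma \ref{423.28}, and use the computation $x=xe_i=xf_j\Rightarrow x=xe_if_j$ (the paper writes it as $x=ae_i=bf_j$, $xf_j=ae_if_j=bf_j^2=x$) to show $I\cap J$ is generated by the idempotents $e_if_j$. Where you diverge is part (i). The paper disposes of the regular case in one line by citing \cite[Corollary 1.25]{st}: in a finite regular monoid \emph{every} two-sided ideal is idempotent, so $I\cap J$ is idempotent with nothing left to check. You instead decompose $I\cap J$ into intersections $Me_iM\cap Mf_jM$ and argue, via the fact that every element of a regular monoid is $\mathfrak{I}$-equivalent to an idempotent ($a=axa$ gives the idempotent $ax$ with $MaM=M(ax)M$), that the intersection is a union of ideals $MgM$ with $g$ idempotent, hence idempotent. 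This works (and the appeal to ``maximal'' $\mathfrak{I}$-classes is unnecessary: every $x$ in the intersection already yields an idempotent generator of $MxM$ lying in the intersection), but it re-derives the cited structure fact rather than using it; note also that regularity alone gives the stronger statement directly, without finiteness or Green's relations, since $a\in T$ and $a=axa$ imply $a\in T\cdot(MT)\subseteq T^2$, so $T^2=T$ for any two-sided ideal $T$. So: same proof for (ii); for (i) a correct but more laborious route, where the paper's citation (or the one-line regularity argument just given) is shorter and even shows that the hypothesis ``idempotent'' on $I$ and $J$ is superfluous in the regular case.
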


\begin{proof} i) This follows from the fact that any two-sided ideal in a finite regular monoid is an idempotent, see \cite[Corollary 1.25]{st}.  
	
ii) Let $I$ and $J$ be idempotent ideals of a finite commutative monoid $M$.  By Lemma \ref{423.28}, we can assume that $I$ is generated by $e_1,\cdots,e_m$ and $J$ is generated by $f_1,\cdots, f_n$, where $e_i,f_j$ are idemotents. Denote by $K$ the ideal generated by $e_if_j$. Clearly, $K\subseteq I\cap J$. Conversely, take $x\in  I\cap J$. We can write $x=ae_i=bf_j$, for some $a,b\in M$ and $1\leq i\leq m, 1\leq j\leq n$. We have
$$xf_j=ae_if_j=bf_j^2=bf_j=x.$$
As such, $x\in K$ and hence, $K=I\cap J$ is also generated by idempotents.
\end{proof}

Our interest in $\ii(M)$ comes from the bijection
\begin{equation}
	{\Loc}(\Sets_M)\cong \ii(M),
\end{equation}
which is true for all finite $M$. This follows from Lemmas \ref{421.08} and \ref{422.08}. In particular, $|{\Loc}(\Sets_M)|$ is finite and
\begin{equation}
|{\Loc}(\Sets_M)|=|{\Loc}(\Sets_{M^{op}})|
\end{equation} 
holds.
\\

It is well-known, that any finite join-semilattice $L$ with greatest element, is a lattice, where 
$$a\wedge b= \bigvee_ xx,$$
with $a,b\in L$ and $x$ running through all the elements of the set
$$\{x\in L| x\leq a \quad {\rm and} \quad x\leq b \}.$$
It follows that for finite $M$, the set $\ii(M)$ is a lattice.

\begin{Le}\label{424.08} Let $I$ and $J$ be two-sided idempotent ideals. Then
$$I\wedge J=\bigcup_{e} MeM,$$
where $e$ runs through all the idempotents of $I\cap J$.
\end{Le}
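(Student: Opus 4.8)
The statement to prove is that for two-sided idempotent ideals $I,J$ of a finite monoid $M$, the meet in the lattice $\ii(M)$ is $I\wedge J=\bigcup_e MeM$, where $e$ ranges over the idempotents contained in $I\cap J$. The plan is to exploit the explicit description of meets in a finite join-semilattice recalled just above the statement: $I\wedge J$ is the join (i.e.\ union) of all two-sided idempotent ideals contained in both $I$ and $J$. So I must show that $\bigcup_e MeM$ is the largest idempotent ideal sitting inside $I\cap J$.

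First I would check that $N:=\bigcup_e MeM$ (union over idempotents $e\in I\cap J$) really is an element of $\ii(M)$ contained in $I\cap J$. Each $MeM$ is a two-sided idempotent ideal ($MeMMeM=MeMeM=MeM$ since $e^2=e$), hence a union of such is a two-sided idempotent ideal — this is exactly the observation that $\ii(M)$ is a join-semilattice under union, already used in the text. Moreover if $e\in I\cap J$ then, $I$ and $J$ being two-sided ideals, $MeM\subseteq I$ and $MeM\subseteq J$, so $N\subseteq I\cap J$. Thus $N$ is a candidate, and by the formula for meets it suffices to show $N$ dominates every idempotent ideal $L\in\ii(M)$ with $L\subseteq I\cap J$.

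So let $L$ be such an ideal. By Lemma \ref{423.28}, $L=\bigcup_{i}Mf_iM$ for some finite family of idempotents $f_i\in L$. Each $f_i$ then lies in $L\subseteq I\cap J$, so each $f_i$ is an idempotent of $I\cap J$, whence $Mf_iM\subseteq N$ by definition of $N$; taking the union over $i$ gives $L\subseteq N$. This shows $N$ is the greatest idempotent ideal contained in $I\cap J$, hence $N=I\wedge J$ by the meet formula, completing the proof.

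The only real content is Lemma \ref{423.28} — that in a finite monoid every two-sided idempotent ideal is a union of principal ideals $MeM$ generated by idempotents — and this is quoted from the literature and may be assumed. Given that, the argument is essentially formal: the main (minor) point to be careful about is that the meet in $\ii(M)$ is computed inside $\ii(M)$ and not as the plain intersection $I\cap J$ (which need not be idempotent, as Example \ref{425.08}(i) shows), so I must consistently phrase everything in terms of "largest idempotent ideal below both", which is exactly what the displayed union achieves.
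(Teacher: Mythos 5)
Your proposal is correct and follows essentially the same route as the paper: show the union $\bigcup_e MeM$ is an idempotent two-sided ideal contained in $I\cap J$, then use Lemma \ref{423.28} to see that any idempotent ideal below both $I$ and $J$ is contained in that union, so it is the meet in $\ii(M)$. Your extra remarks (idempotency of each $MeM$ and the caveat that the meet need not be $I\cap J$ itself) are accurate but add nothing beyond the paper's argument.
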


\begin{proof} The RHS is a two-sided ideal, generated by idempotents. Hence, it is an element of $\ii(M)$. If $e\in I\cap J$, it follows that $MeM\subseteq I$ and $MeM\subseteq J$. Thus, RHS $\subseteq$ LHS. Conversely, assume $K$ is a two-sided idempotent ideal, such that $K\subseteq I$ and $K\subseteq J$. By Lemma \ref{423.28}, we can write $K=\bigcup_{i\in I}Me_iM$ for some idempotents $e_i$. We have $e_i\in I\cap J$ by assumption, which implies that LHS is also a subset of RHS. The result follows.
\end{proof}

\begin{Co} If $M$ is finite, then $\ii(M)$ is a distributive lattice.
\end{Co}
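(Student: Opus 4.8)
The plan is to reduce distributivity to a concrete identity about idempotent ideals, using the description of joins as unions and the formula for meets supplied by Lemma \ref{424.08}. Since $\ii(M)$ is a finite join-semilattice with greatest element $M$, it is automatically a lattice (as noted before the lemma), so only the distributive law remains. I will verify the law in the form $I\wedge(J\vee K)=(I\wedge J)\vee(I\wedge K)$; since a finite lattice is distributive iff it satisfies either one of the two dual distributive laws, this suffices. One inclusion, $(I\wedge J)\vee(I\wedge K)\subseteq I\wedge(J\vee K)$, holds in every lattice, so the real content is the reverse inclusion.

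First I would unwind both sides via Lemma \ref{423.28} and Lemma \ref{424.08}. Write $J\vee K=J\cup K$. By Lemma \ref{424.08}, $I\wedge(J\cup K)=\bigcup_e MeM$ where $e$ ranges over idempotents of $I\cap(J\cup K)=(I\cap J)\cup(I\cap K)$. So it is enough to show: for every idempotent $e\in I$ with $e\in J\cup K$, the ideal $MeM$ is contained in $(I\wedge J)\vee(I\wedge K)=\bigl(\bigcup_{f}MfM\bigr)\cup\bigl(\bigcup_{g}MgM\bigr)$, where $f$ ranges over idempotents of $I\cap J$ and $g$ over idempotents of $I\cap K$. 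But this is immediate: if $e\in J$ then $e$ is an idempotent of $I\cap J$, so $MeM$ is one of the terms on the right; similarly if $e\in K$. Hence $MeM\subseteq(I\wedge J)\vee(I\wedge K)$ in either case, giving $I\wedge(J\vee K)\subseteq(I\wedge J)\vee(I\wedge K)$.

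Combining this with the trivial inclusion yields $I\wedge(J\vee K)=(I\wedge J)\vee(I\wedge K)$, i.e.\ $\ii(M)$ is distributive. I expect the only subtle point to be the bookkeeping in Lemma \ref{424.08}: one must note that $I\cap(J\cup K)$ equals $(I\cap J)\cup(I\cap K)$ as plain sets (true for any subsets), and that an idempotent lying in this union lies in $I\cap J$ or in $I\cap K$, so it genuinely contributes a generator to one of the two meets on the right-hand side. No finiteness is needed beyond what guarantees $\ii(M)$ is a lattice in the first place and makes Lemmas \ref{423.28} and \ref{424.08} available; the distributive identity itself is then formal.
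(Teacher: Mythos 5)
Your proposal is correct and follows essentially the same route as the paper: both reduce distributivity to Lemma \ref{424.08}, observing that $I\wedge(J\cup K)$ is generated by the idempotents of $I\cap(J\cup K)=(I\cap J)\cup(I\cap K)$, while $(I\wedge J)\cup(I\wedge K)$ is generated by the idempotents of $I\cap J$ and of $I\cap K$. You merely spell out the set-theoretic identity and the two inclusions that the paper leaves implicit.
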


\begin{proof} Take $I,J,K\in \ii(M).$ We need to show that
$$I\wedge (J\cup K)=(I\wedge J)\cup (I\wedge K)$$
By Lemma \ref{424.08}, we see that LHS is generated as a two-sided ideal by the idempotent elements of $ I\cap (J\cup K)$, while the RHS is generated (as a two-sided ideal) by the idempotent elements of $I\cap J$ and $I\cap K$. The reseult follows.  
\end{proof}

\subsection{Topology on $\F_M$ }

We start by recalling the well-known relationship between (finite) distributive lattices, posets and topologies.

Any poset $P$ has a natural topology, called the \emph{order topology}, where a subset $S\subseteq P$ is open if $y\in P$ and $x\leq y$ imply $x\in P$. Thus, $\ope(P)$ is a distributive lattice and it is finite if $P$ is finite. It is well-known, that any finite distributive lattice $L$ is of this form, for a uniquely defined $P$ (see for example \cite[p.106]{enum}). Specifically, for $P={\sf Irr}(L)$, the subset of irreducible elements of $L$ (an element $x\in L$ is irreducible if $x=y\vee z$ implies $x=y$ or $x=z$). 

The poset of our interest is $\F_M$, the iso-classes of the topos points of $M$, where $M$ is finite. According to Corollary \ref{372.08}, we have
$$\F_M\cong \mathsf{Idem}_{\mathfrak{I}}(M).$$
This allows to work with $\mathsf{Idem}_{\mathfrak{I}}(M)$ instead. This set has a canonical order
$$e\leq _{\mathfrak{I}} f \quad {\rm if} \quad MeM\subseteq mfM.$$
Thus, we have a canonical order topology on $\mathsf{Idem}_{\mathfrak{I}}(M)$ and as such, on $\F_M$. 

\begin{Pro} One has a bijection
$$\ii(M)\to {\sf Open}(\mathsf{Idem}_{\mathfrak{I}}(M)).$$
\end{Pro}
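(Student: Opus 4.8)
The plan is to build the map explicitly and then exhibit an inverse, using the structural description of idempotent ideals from Lemma~\ref{423.28}. To a two-sided idempotent ideal $I\in\ii(M)$ I would associate the set $\varphi(I)=\{[e]\in\mathsf{Idem}_{\mathfrak{I}}(M)\mid e\in I\}$ of $\mathfrak{I}$-classes of idempotents contained in $I$. First I would check this is well-defined and open: if $e\in I$ and $f\leq_{\mathfrak{I}} e$, then $MfM\subseteq MeM\subseteq I$, so $f\in I$; and if $e\,\mathfrak{I}\,e'$ then $e\in I \iff MeM=Me'M\subseteq I\iff e'\in I$, so membership depends only on the class. Hence $\varphi(I)$ is a down-set in the order $\leq_{\mathfrak{I}}$, i.e. an open subset of the order topology (note the excerpt's convention that open means downward closed).

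Next I would construct the inverse. Given an open (= downward closed) $U\subseteq \mathsf{Idem}_{\mathfrak{I}}(M)$, set $\psi(U)=\bigcup_{[e]\in U} MeM$, the union taken over a set of representatives (well-defined since $MeM$ depends only on $[e]$). By Lemma~\ref{423.28} this is a two-sided idempotent ideal, so $\psi(U)\in\ii(M)$. The two composites must be shown to be identities. For $\psi\circ\varphi$: given $I\in\ii(M)$, Lemma~\ref{423.28} writes $I=\bigcup_{i} Me_iM$ with each $e_i\in I$, hence each $[e_i]\in\varphi(I)$, giving $I\subseteq\psi(\varphi(I))$; conversely each $MeM$ with $e\in I$ lies in $I$, so $\psi(\varphi(I))\subseteq I$. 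For $\varphi\circ\psi$: clearly every $[e]\in U$ lies in $\varphi(\psi(U))$ since $e\in MeM\subseteq\psi(U)$; conversely if $f$ is an idempotent with $f\in\psi(U)=\bigcup_{[e]\in U} MeM$, then $f\in MeM$ for some $[e]\in U$, so $MfM\subseteq MeM$, i.e. $f\leq_{\mathfrak{I}} e$, and since $U$ is downward closed, $[f]\in U$. Thus $\varphi$ and $\psi$ are mutually inverse.

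I expect the only genuine content, rather than bookkeeping, to be the step showing $f\in MeM$ forces $[f]\leq_{\mathfrak{I}}[e]$ together with the appeal to Lemma~\ref{423.28} for surjectivity-type closure — that is, the fact that an arbitrary idempotent ideal is exactly a union of principal ideals $MeM$ is what makes $\varphi$ hit every open set and what makes $\psi\circ\varphi=\id$. Everything else is a direct unwinding of the definitions of $\leq_{\mathfrak{I}}$ and of the order topology. Finally, one can remark (though it is not needed for the bijection) that this map is an isomorphism of lattices: it is clearly order-preserving in both directions since both $\varphi$ and $\psi$ are monotone for inclusion, and under the identification $\F_M\cong\mathsf{Idem}_{\mathfrak{I}}(M)$ of Corollary~\ref{372.08} this recovers the promised bijection ${\Loc}(\Sets_M)\cong{\sf Open}(\F_M)$ after composing with ${\Loc}(\Sets_M)\cong\ii(M)$.
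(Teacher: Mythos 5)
Your argument is correct, but it takes a different route from the paper. The paper leans on the structure theory it has just set up: having shown that $\ii(M)$ is a finite distributive lattice, it invokes the standard representation theorem (cited from Stanley) that such a lattice is $\ope(P)$ for $P$ its poset of irreducible elements, and the entire proof consists of checking that the irreducibles of $\ii(M)$ are exactly the principal ideals $MeM$ with $e$ idempotent (using $I=\bigcup_{e\in I}MeM$ and the observation that $MeM=J\cup K$ forces $e\in J$ or $e\in K$). You instead build the bijection $\varphi(I)=\{[e]:e\in I\}$ and its inverse $\psi(U)=\bigcup_{[e]\in U}MeM$ by hand and verify both composites, using only Lemma \ref{423.28}. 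Your route is more self-contained: it does not need the distributivity corollary or the Birkhoff-type theorem, and it gives directly that the bijection is an inclusion-preserving (indeed lattice) isomorphism, something the paper gets only implicitly through the representation theorem; the paper's route is shorter given the machinery already in place and also leaves implicit the identification of the poset of irreducibles with $(\mathsf{Idem}_{\mathfrak{I}}(M),\leq_{\mathfrak{I}})$, which your argument handles explicitly. One small imprecision: Lemma \ref{423.28} as stated only says that an idempotent ideal is a union of ideals $Me_iM$; your claim that $\psi(U)$ is idempotent uses the (easy) converse, which deserves a one-line check, e.g.\ $mem'=(me)(em')\in (MeM)(MeM)$, so each $MeM$ and hence any union of them is an idempotent ideal. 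This is exactly the fact the paper itself uses silently in Lemma \ref{424.08}, so it is a citation nuance rather than a gap.
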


\begin{proof} It suffices to show that irreducible elements of $\ii(M)$ are exactly ideals of the form $MeM$, where $e$ is an idempotent. This follows form the facts that for any element $I\in\ii(M)$, one has $I=\bigcup _{e\in I }MeM$, where $e$ is an idempotent. If $MeM=J\cup K$, then $e\in K$, or $e\in J$. We get that $MeM=J$, or $MeM=K$.
\end{proof}

\begin{center}

\end{center}


\begin{thebibliography}{9}
	\bibitem{SGA4}  M Artin, N. Bourbaki, A Grothendieck, P Deligne and J.-L. Verdier. Th\'eorie des Topos et Cohomologie \'Etale des Sch\'emas. S\'eminaire de G\'eometrie Alg\'ebrique du Bois-Marie 1963-1964 (SGA 4): Tome 1 Springer LNM 269 (1972).

	\bibitem{arith_site} A. Connes and C. Consani. Geometry of the arithmetic site, Adv. Math. 291 (2016), 274–329.
	
	\bibitem{f1zeta} A. Connes and C. Consani. Schemes over $F_1$ and zeta functions. Compos. Math. 146 (2010), 1383–1415.
	
	\bibitem{corinas} G. Cortinas, C. Haesemeyer M. E. Walker and C. Weibel. Toric varieties, monoid schemes and cdh descent. J. Reine Angew. Math., 678 (2013)  1–54.
	
	\bibitem{deitmar} A. Deitmar. Schemes over $F_1$. In: Number Fields and Function Fields. Two Parallel Worlds. Ed. by G. van der Geer, B. Moonen, R. Schoof. Progr. in Math, vol. 239, 2005.
	
	\bibitem{h} J. Hemelaer. An arithmetic topos for integer matrices, J. Number Theory 204. (2019), 155–184.
	
	\bibitem{HR} J. Hemelaer and M. Rogers. Monoid properties as invariants of toposes of monoid actions. arXiv:2004.10513.
	
	\bibitem{topos} P.T. Johnstone. Topos Theory. Academic Press. 1977.
	
	\bibitem{kato} K. Kato. Toric singularities. American Journal of Mathematics. 116(1994),1073-1099
	
	\bibitem{llb} L. Le Bruyn. Three arithmetic sites, preprint (2020), arXiv:2003.01387.
	
	\bibitem{mm} S. Mac Lane and I. Moerdijk. Sheaves in geometry and logic. A first introduction to topos theory. Corrected reprint of the 1992 edition. Universitext. Springer-Verlag, New York, 1994. xii+629 pp 
	
	\bibitem{m} I. Moerdijk. Classifying spaces and classifying topoi. Lecture Notes in Mathematics. v. 1616. Springer-Verlag, Berlin. 1995.
	
	\bibitem{p1} I. Pirashvili. On the spectrum of monoids and semilattices. J. of Pure and Appl. Algebra, 217(2013), 901-906. 
	
	\bibitem{points} I. Pirashvili. Topos points of quasi-coherent sheaves of monoid schemes. Math. Proc. of Cambridge Phil. Soc. 169(2020). 31-74.
	
	\bibitem{reconstruction} I. Pirashvili. Reconstruction theorem for monoid schemes. arXiv:2002.04336 
	
	\bibitem{rogers} M. Rogers. Toposes of discrete monoid actions, arXiv:1905.1027. 
	
	\bibitem{enum} R. P. Stanlay. Enumerative Combinatorics, v. I. Wadsworth. Monterey, CA, 1986.
	
	\bibitem{st} B. Steinberg. Representation theory of finite monoids. Springer-Verlag. 2016
	
	\bibitem{bo} B. Stenstr\"om. Rings of Quotients. An introduction to methods of ring theory. Springer-Verlag. 1975.
	
	\bibitem{bo_m} B. Stenstr\"om. Flatness and localization over monoids. Math. Nach. 48 (1971), 315-334.
\end{thebibliography}
\end{document}